\newtheorem{theorem}{Theorem}[section]
\newtheorem{lemma}[theorem]{Lemma}
\newtheorem{corollary}[theorem]{Corollary}
\newtheorem{proposition}[theorem]{Proposition}
\theoremstyle{definition}
\newtheorem{definition}[theorem]{Definition}
\newtheorem{theorem-definition}[theorem]{Theorem-Definition}
\theoremstyle{remark}
\newtheorem{remark}[theorem]{Remark}
\numberwithin{equation}{section}
\newcommand{\lam}{\lambda}
\newcommand{\Pk}{\mathbb{P}^k}
\newcommand{\C}{\mathbb{C}}
\renewcommand{\P}{\mathbb{P}}
\DeclareMathOperator{\jac}{Jac}
\DeclareMathOperator{\Supp}{Supp}
\DeclareMathOperator{\diam}{diam}
\DeclareMathOperator{\Lip}{Lip}
\renewcommand{\epsilon}{\varepsilon}
\begin{document}

\title[Strong probabilistic
stability in holomorphic
families]{Strong probabilistic
stability in holomorphic families
of endomorphisms of $\Pk  (\mathbb C)$ and polynomial-like maps}

\author{Fabrizio Bianchi}
\author{Karim Rakhimov}

\address{CNRS, Univ. Lille, UMR 8524 - Laboratoire Paul Painleve, F-59000 Lille, France}
\email{fabrizio.bianchi@univ-lille.fr}
\address{CNRS, Univ. Lille, UMR 8524 - Laboratoire Paul Painleve, F-59000 Lille, France}

\curraddr{National University of Singapore, Lower Kent Ridge Road 10,
Singapore 119076, Singapore}

\email{rkarim@nus.edu.sg, karimjon1705@gmail.com}

\subjclass[2010]{}
\date{}

\keywords{}

\begin{abstract}
We prove that, in 
stable
families
of endomorphisms of $\mathbb{P}^k (\mathbb C)$, all invariant measures
whose measure-theoretic entropy is strictly larger than $(k-1)\log d$
at a given 
parameter
can be followed holomorphically with the parameter  in all the parameter space.
As a consequence, almost all points (with respect
to any such measure at any parameter)
in the Julia set can be followed holomorphically
without intersections.
This generalizes previous results
 by Berteloot, Dupont, and the first author 
for the measure of maximal entropy,
and
provides
a parallel in this setting to the probabilistic stability of
H\'enon maps by Berger-Dujardin-Lyubich.
 Our proof relies both on  techniques from the theory of stability/bifurcation in any dimension
and on an explicit lower bound for the Lyapunov exponents for an ergodic
measure in terms of its measure-theoretic entropy, due to de Th{\'e}lin and Dupont.

 A local version of our result holds also for all measures supported on the Julia set with just strictly positive Lyapunov exponents and not charging the post-critical set. Analogous results hold in  families
of polynomial-like maps 
 of large topological degree.
In this case, 
 as part of our proof, we
also  give 
a sufficient condition for the positivity
of the Lyapunov exponents 
of an ergodic measure
for a polynomial-like map in any dimension
in term of its  measure-theoretic entropy,
generalizing to this setting the analogous result by de Thélin and Dupont valid on $\mathbb P^k (\mathbb C)$.
\end{abstract}

\maketitle

\subsection*{Notation}

We denote
by $\mathbb{P}^k=  \mathbb P^k (\mathbb C)$
the complex projective space of dimension $k$. 
A  $(p,p)$-current is a current of bidegree $(p,p)$.
If $W \subset \mathbb C^k$ is an open set
and
$S$  a positive closed $(p,p)$-current on $W$,
we denote
its mass
by  $\|S\|_W:=\int_W S\wedge \omega^{k-p}$,
where $\omega$ is the standard K\"{a}hler form of $\mathbb{C}^k$.
Given a holomorphic map $f$ we denote by $\jac f$ 
the Jacobian of $f$. If $\nu$ is an ergodic $f$-invariant
probability measure we denote by $h_\nu(f)$ the measure-theoretic entropy
of $\nu$ with respect to $f$.

\section{Introduction}

By a classical result by Mané-Sad-Sullivan \cite{MSS83}
and Lyubich \cite{Ly83a}, the stability of a family of rational
maps
of degree $d\geq 2$
is determined by the stability of its repelling cycles.
More precisely, the so-called $\lambda$-lemma ensures that,
once one can follow holomorphically with the parameter a dense
subset of the Julia set $J_{\lam_0}$
(the support of the unique measure of maximal entropy of $f_{\lam_0}$ \cite{FLM,Ly83b})
at
a given parameter $\lambda_0$,
then \emph{every} point of $J_{\lam_0}$
can be followed holomorphically.
The important point here is that  the motions corresponding
to different points (that exist thanks to Montel theorem)
do not intersect. This is a consequence of Hurwitz theorem.
The \emph{stability locus}
is the (open dense)
subset of the parameter space where
the above condition of stability holds true. The complement
of such set is the \emph{bifurcation locus}. By a result of
 DeMarco \cite{dM03}, such locus is the support of
 a naturally defined \emph{bifurcation current}, see
 also \cite{Prz85,Si81} for the polynomial case.

A generalization of the theory by Mané-Sad-Sullivan, Lyubich,  and DeMarco
to 
 families of 
endomorphisms of $\mathbb P^k$
 of algebraic degree $d \geq 2$
in any dimension $k\geq 1$ has been recently
carried out by Berteloot, Dupont, and the
first author in \cite{BBD18,B19}, see also the presentation
in \cite{BB18}.
 As most of the one-dimensional techniques do not apply anymore,
the main tools were ergodic
and (pluri)potential theory. Very roughly
speaking, compactness of suitable
spaces of currents and plurisubharmonic functions
played the role of Montel theorem.
And precise statistical properties of the measure
of maximal entropy
\cite{BD01,DS10,FS94}
(among them,
an explicit lower bound for its Lyapunov exponents  \cite{BD99})
played somehow the role of an asymptotic Hurwitz theorem.
As a consequence, dynamical stability
in such families
 (defined for instance by the vanishing of a natural bifurcation current \cite{BB07, Pha05}, or by a condition on the stability of the repelling points)
is equivalent to
the existence of a holomorphic motion for
a \emph{full measure subset} of the Julia set,
with respect to the 
 measure of maximal entropy.
We refer to \cite{AB18,B18,BB18h,BB22,Duj22} for further developments  of the theory of stability/bifurcation in any dimension, and in particular to 
\cite{AB22,BT17,Bie19,Duj17,GTV23,Taf21} for new phenomena with respect to the one-dimensional case.

The main goal of this paper is
 to strengthen the main result in \cite{BBD18} by showing that,
in stable families of endomorphisms of $\mathbb P^k$
 of algebraic degree $d\geq 2$,
dynamical stability implies the existence of a well-defined
 local
motion for \emph{almost all points} with respect
to \emph{all measures 
 on the Julia set
with strictly positive Lyapunov
exponents and not charging the post-critical set}. By a result of de Th{\'e}lin and Dupont
\cite{dTh08,D12},
this
in particular
 applies to all ergodic measures whose
  measure-theoretic entropy is  strictly
 larger
than $(k-1) \log d$.
In this case, the motion is well-defined on all the
parameter space.
Observe that the topological entropy of an endomorphism
of $\mathbb P^k$ of algebraic degree $d$
is equal to $k \log d$ \cite{G03} (which is
then equal to the  measure-theoretic 
entropy of the unique measure of
maximal entropy), and that $(k-1)\log d$ is a natural
threshold for many dynamical phenomena.
See \cite{BD20,BD22,D12,SUZ,UZ13}
for large classes of examples of such measures
in any dimension,
and their statistical properties.

Let us notice that an analogous result is already known
in the setting of polynomial diffeomorphisms of $\C^2$
(usually called H\'enon maps). Indeed,
a parallel theory to that of \cite{BBD18,B19}
has been developed  in this setting
by Dujardin and Lyubich, see \cite{DL15, Duj22}. Stability is
defined also in this setting by means of
a number of equivalent conditions, among them
the existence of a \emph{branched} holomorphic motion
for the Julia sets, meaning that natural motions
of distinct points can a priori intersect.
In \cite{BD17} Berger and Dujardin
proved that such motion is \emph{unbranched}
at almost every point
with respect to all measures of positive entropy.
Since the topological entropy is the logarithm of the algebraic
degree in this context, this corresponds to our threshold.
 The current work was inspired by \cite{BD17} and provides a parallel to that result for families of endomorphisms of $\mathbb P^k$. As was already the case for \cite{DL15} and \cite{BBD18}, our approach is completely different from the one in \cite{BD17}, since the key point here is to understand the relation between the Julia sets and the dynamics of the critical set (which does not exist in the case of diffeomorphims of $\mathbb C^2$). This is achieved with techniques from pluripotential theory.

\subsection{Definitions and results}
We will consider in this paper a more general
setting than that of  the endomorphisms of $\mathbb P^k$,
that is that of \emph{polynomial-like maps
of 
 large
topological degree},
see Definition \ref{def:large-top-deg}.
However, we restrict to the family $\mathcal H_d (\P^k)$
of all the endomorphisms of $\P^k$ of a given
algebraic degree $d  \geq 2$
in this introduction for simplicity.

The main result of \cite{BBD18} in this setting
can be stated as follows.
 Given a holomorphic family $(f_{\lam})_{\lam \in M}$ of endomorphisms of $\mathbb P^k$,
we denote by $\mu_\lam$
the equilibrium measure of $f_\lam$
(i.e., the unique measure of maximal entropy $ k \log d$ of $f_\lambda$
\cite{BD01,DS10,FS94}),
and recall that the Julia set $J_\lambda$
of $f_\lam$ is by definition the support of $\mu_\lam$.

\begin{theorem-definition}[Berteloot-B.-Dupont \cite{BBD18}]\label{t:BBD-intro}
Let $M$ be an open connected and simply connected subset
of the family $\mathcal H_d (\P^k)$ of the
endomorphisms of $\Pk$
of a given algebraic degree $d  \geq 2$. The
following conditions are equivalent:
\begin{enumerate}
    \item the repelling points in the Julia sets move 
    holomorphically with $\lambda$ (see Definition 
    \ref{def:motion-rep-all});
    \item the sum $L(\lam)=\int \log |\jac f_\lambda|\mu_\lambda$ of the Lyapunov exponents of 
    $\mu_\lam$ satisfies $dd^c L \equiv 0$;
    \item there exists an equilibrium lamination.
\end{enumerate}
We say that a family is \emph{stable}
if any of the
equivalent
conditions above
holds.
\end{theorem-definition}

An equilibrium lamination is defined as follows.
 Denote by $\mathcal J$ the set of all
 holomorphic maps
 $\gamma\colon M\to \P^k$ such that $\gamma (\lam)$
 belongs to $J_\lam$ for all $\lam\in M$.
 We often  identify a map $\gamma$ with its graph
$\Gamma_\gamma$ in the product space $M\times \P^k$.
The family $(f_\lam)_{\lam \in M}$ induces a dynamical
system $\mathcal F$ on the space $\mathcal J$
by $\mathcal F \gamma (\lam) := f_\lam (\gamma(\lam))$.
Observe also that the maps $f_\lam$ can be seen as fibers
of a single holomorphic map
$f\colon M\times \P^k \to M\times \P^k$.
We denote by $C_f$ the critical set of
such $f$, and by $GO(C_f)$
the grand orbit of $C_f$, i.e.,
$GO (f) := \cup_{n,m\geq 0} f^{-m} ( f^{n} (C_f))$.
We also denote by $d_t$ the topological degree of any element of $\mathcal H_d (\mathbb P^k)$. Observe that in this case we have $d_t = d^k$.

\begin{definition}\label{def:lamination}
A \emph{dynamical lamination}
for the family
$f$ is an $\mathcal{F}$-invariant subset $\mathcal{L}$
of $\mathcal{J}$ such that
\begin{enumerate}
    \item $\Gamma_\gamma\cap \Gamma_{\gamma'}=\emptyset$
for all    $\gamma\ne \gamma'\in\mathcal{L}$;
    \item $\Gamma_\gamma\cap GO(C_f)=\emptyset$
 for all   $\gamma\in\mathcal{L}$;
    \item $\mathcal{F}:\mathcal{L}\to \mathcal{L}$ is $d_t$-to-$1$.
\end{enumerate}

An \emph{equilibrium lamination} or
$\mu_\lambda$-\emph{measurable holomorphic motion of $J_\lambda$}
is a dynamical
lamination satisfying the
following further property:
\begin{enumerate}
\setcounter{enumi}{3}
    \item $\mu_{\lam}(\{\gamma(\lambda):\gamma\in\mathcal{L}\})=1$
    for all $\lam\in M$;
\end{enumerate}
\end{definition}

Our main result  in the case of the family $\mathcal H_d(\P^k)$ can be stated as follows.

\begin{theorem}\label{t:main-intro}
Let $M$ be an open  connected and simply connected subset
of the family $\mathcal H_d (\P^k)$ of the
endomorphisms of $\Pk$
of a given algebraic degree $d  \geq 2$
and assume that the family $(f_\lam)_{\lam \in M}$
is stable.
Then there exists a dynamical
lamination $\mathcal L$
satisfying
\begin{itemize}
    \item[(4')] $\nu (\{\gamma(\lambda):\gamma\in\mathcal{L}\})=1$
for every $\lam \in M$ and
every ergodic $f_\lam$-invariant measure $\nu$
whose 
 measure-theoretic entropy is strictly larger than $(k-1)\log d$.
\end{itemize}
\end{theorem}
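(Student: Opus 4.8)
The plan is to build $\mathcal L$ out of holomorphic motions obtained, one backward orbit at a time, by following contracting inverse branches; stability enters through Theorem-Definition \ref{t:BBD-intro}, which provides an equilibrium lamination and, in particular, holomorphic motions of all repelling cycles lying in the Julia sets $J_\lambda=\Supp\mu_\lambda$. The guiding principle is that a holomorphic motion through a point $x\in J_{\lambda_0}$ can be constructed from a single backward orbit of $x$ as soon as the Lyapunov exponents are strictly positive along that orbit: the inverse branches of $f_{\lambda_0}^{\,n}$ along such an orbit are then defined on balls of a definite size and contract exponentially, and, using stability, the same persists uniformly for $\lambda$ near $\lambda_0$. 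Since positivity of the exponents along almost every orbit is precisely what Oseledets' theorem yields for \emph{any} invariant measure with strictly positive exponents, the mechanism that produces the equilibrium lamination for $\mu_\lambda$ can be run for every such measure.

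By the ergodic decomposition we may assume $\nu$ ergodic, and it is enough to produce, for each parameter $\lambda_0\in M$ and each ergodic $f_{\lambda_0}$-invariant measure $\nu$ with strictly positive exponents, a dynamical lamination $\mathcal L_\nu$ with $\nu(\{\gamma(\lambda_0):\gamma\in\mathcal L_\nu\})=1$; the desired lamination will then be $\mathcal L:=\bigcup_{\lambda_0,\nu}\mathcal L_\nu$. Fixing $\lambda_0$ and $\nu$, I would first record the preliminaries: such a $\nu$ is supported in $J_{\lambda_0}$ (as in the abstract) and gives no mass to the grand orbit $GO(C_{f_{\lambda_0}})$ of the critical set; passing to the natural extension $(\widehat J_{\lambda_0},\widehat f_{\lambda_0},\widehat\nu)$, whose points are the histories $\widehat x=(x_{-n})_{n\ge 0}$, Pesin theory --- in the form of inverse-branch estimates in the spirit of de Thélin and Dupont --- provides, for $\widehat\nu$-almost every $\widehat x$, a radius $r(\widehat x)>0$ and inverse branches of $f_{\lambda_0}^{\,n}$ along the backward orbit that are defined on $B(x_0,r(\widehat x))$ and contract at a uniform exponential rate, all depending measurably on $\widehat x$.

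The heart of the matter is to deform this picture holomorphically. For $\lambda$ near $\lambda_0$ one wants inverse branches $g_n^{\lambda}$ of $f_\lambda^{\,n}$, holomorphic in $\lambda$ and uniformly contracting on a ball of definite radius, so that $\gamma_{\widehat x}(\lambda):=\lim_n g_n^{\lambda}(x_0)$ exists and defines a holomorphic motion of $x_0$. I would obtain this by approximation from repelling periodic points $p_j\to x_0$ of $f_{\lambda_0}$, dense in $J_{\lambda_0}$, whose holomorphic motions $\rho_j$ are furnished by stability: combining the uniform contraction above with the control that stability gives on the way the relevant orbits stay in $J_\lambda$ and away from the postcritical set as $\lambda$ varies, one shows that $\{\rho_j\}$ is a normal family all of whose limit values coincide, the common value being the sought $\gamma_{\widehat x}$. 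This is the step I expect to be the main obstacle: unlike the measure of maximal entropy, a general $\nu$ carries no quantitative information at parameters other than $\lambda_0$, so the Pesin radii and contraction rates must be propagated to nearby parameters using only the stability of the family and the geometry of $J_\lambda$; carrying the construction out measurably in $\widehat x$ is a further, more technical point. As $M$ is simply connected and these graphs never meet the branch locus of $f$, the local motions then extend to all of $M$.

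It remains to assemble the pieces. With $\mathcal L_\nu:=\{\gamma_{\widehat x}:\widehat x\text{ generic}\}$, one checks Definition \ref{def:lamination}: disjointness (1) and avoidance of $GO(C_f)$ (2) follow from Hurwitz's theorem applied to the pairwise disjoint graphs $\Gamma_{\rho_j}$, while the $d_t$-to-$1$ property (3) follows from the invertibility of $\widehat f_{\lambda_0}$ together with the fact that $f_\lambda$ has exactly $d_t$ holomorphic inverse branches along any graph avoiding $GO(C_f)$, hence in particular the critical values $f(C_f)$. At $\lambda_0$ the graphs of $\mathcal L_\nu$ pass through the base points of a $\widehat\nu$-full set of histories, hence through a $\nu$-full subset of $J_{\lambda_0}$, which is (4'). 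Finally, any two graphs produced by this procedure --- for whatever $\nu$ and $\lambda_0$ --- are limits of graphs of moving repelling cycles, so Hurwitz's theorem again forces any two of them to be equal or disjoint; therefore $\mathcal L=\bigcup_{\lambda_0,\nu}\mathcal L_\nu$ is $\mathcal F$-invariant, satisfies (1)--(3), and by construction satisfies (4'), giving the claimed lamination.
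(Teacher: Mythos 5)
Your sketch takes a genuinely different route from the paper, and it contains a gap that is in fact the crux of the whole theorem.

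The paper does not construct graphs one backward orbit at a time. Instead, it works at the level of \emph{webs}: first an acritical $\nu$-web $\mathcal M$ is built by approximating $\nu$ with weighted combinations of moving repelling cycles (Lemmas \ref{limitlemma}--\ref{l:existanceweb}); next the slices $\nu_\lambda := (p_\lambda)_*\mathcal M$ are shown to be ergodic and $f_\lambda$-invariant (Lemma \ref{l:projectergodic}); then one proves that the Lyapunov function $L_{\mathcal M}(\lambda)$ is pluriharmonic (Lemma \ref{ddcl=0}), using the acriticality of $\mathcal M$; combined with Pham's result (Proposition \ref{p:pham}) that the upper partial sums of Lyapunov exponents of $\nu_\lambda$ are psh, this yields that $\lambda\mapsto L^k_{\mathcal M}(\lambda)$ is \emph{lower semicontinuous} (Corollary \ref{corol-lowersc}). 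Only once this locally uniform positive lower bound is in place does the machinery of \cite{BBD18} produce the lamination, taken canonically as $\mathcal L := \mathcal J\setminus(\mathcal J_{\cap}\cup\mathcal J_s)$ with $\mathcal K = \mathcal J$ in the Fact.

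This is precisely the step you flag as ``the main obstacle'' and leave open: ``the Pesin radii and contraction rates must be propagated to nearby parameters using only the stability of the family.'' Stability of $(f_\lambda)$ controls $\mu_\lambda$; it says nothing a priori about the Lyapunov exponents of the pushed-forward measures $\nu_\lambda$ at $\lambda\ne\lambda_0$, and without a quantitative lower bound for the smallest exponent near $\lambda_0$ there is no uniform domain of definition or contraction rate for your inverse branches $g_n^\lambda$. The web-theoretic detour (acriticality $\Rightarrow$ pluriharmonicity of $L_{\mathcal M}$ $\Rightarrow$ lower semicontinuity of $L^k_{\mathcal M}$) is exactly the mechanism that supplies this bound; your sketch would need an equivalent statement, and the normal-family/Hurwitz argument by itself does not give it, because it provides at best one candidate limit graph but no uniform geometric control of the inverse branches away from $\lambda_0$.

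Two further, more minor, soft spots. First, in your assembly $\mathcal L=\bigcup_{\lambda_0,\nu}\mathcal L_\nu$ the Hurwitz argument shows graphs from different pieces are disjoint or equal, but this alone does not immediately give (2) (avoidance of $GO(C_f)$) or that the union is $\mathcal F$-invariant and $d_t$-to-$1$ as a single set; the paper sidesteps this by defining $\mathcal L$ intrinsically, independent of $\nu$ and $\lambda_0$, and then showing that \emph{every} acritical $\nu$-web charges it fully. Second, obtaining acriticality in your setting (your claim that Hurwitz yields $\Gamma_\gamma\cap GO(C_f)=\emptyset$) requires the absence of Misiurewicz parameters and the fact that $\nu$ gives no mass to the critical set, which is how the paper establishes property \eqref{criticalinside} before passing to the limit.
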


In particular, given any $\lam_0\in M$ and any ergodic
$f_{\lam_0}$-invariant  measure 
 $\nu_0$
whose
 measure-theoretic entropy is strictly larger than $(k-1)\log d$, it is possible
to define a measurable holomorphic motion of $J_\lam$
associated to the measure
 $\nu_0$
 on all of $M$.

\smallskip

An analogous result holds in the much more general
setting of families of polynomial
like maps of 
 large
topological degree
(see Definition \ref{def:large-top-deg}),
or of arbitrary subfamilies
of $\mathcal H_d (\P^k)$.
A version of Theorem-Definition \ref{t:BBD-intro}
in this more general setting is
the main result of \cite{B19}, see also \cite{B16}.
We refer to 
Theorem 
\ref{t:mainlamin} and  Corollary \ref{c:pk}
for 
precise statements of our results in these cases
 and to
 Theorem
\ref{t:mainequiv}
for a weaker (local)
version of it holding for all measures supported on the Julia set with strictly positive Lyapunov exponents and 
not charging the postcritical set.
As part of our proof,
we also prove a generalization of de Thélin and Dupont
theorem above 
\cite{dTh08,D12} in this setting, giving the strict positivity
of the Lyapunov exponents of measures of sufficiently
large
 measure-theoretic entropy
for polynomial like maps of 
 large
topological degree,
see Theorem \ref{t:lyapunov-positive}.
This result is crucial to get a uniform domain of existence for the laminations associated with different measures, depending only on their measure-theoretic entropy.

\subsection{Organization of the paper}
In Section \ref{s:PLmaps} we recall some definitions and
results on polynomial-like maps 
 of large
topological degrees.
We also study
the Lyapunov exponents of measures 
with sufficiently
large  measure-theoretic entropy
and prove their strict positivity,
 with an explicit 
lower bound depending on the measure-theoretic entropy of the measure,
see Theorem \ref{t:lyapunov-positive}.
In Section \ref{s:fplmaps} 
we recall definitions and results
 on holomorphic families 
 of polynomial-like maps 
and we state the precise versions of our main results, see
 Theorems \ref{t:mainequiv}  and  \ref{t:mainlamin} and Corollary \ref{c:pk}.
Section \ref{s:proof-thm} is dedicated to the proof
of 
 these results.
In Appendix \ref{app:contraction} we 
record an intermediate contraction estimate along generic inverse orbits
(with respect to naturally defined measures) in the space $(\mathcal J, \mathcal F)$,
for later reference.

\subsection*{Acknowledgments}
The authors would like to thank Romain Dujardin, whose original question motivated the work on this problem, and Maxence Br{\'e}vard, Viet-Anh Nguyen, and Johan Taflin for useful comments and discussions.

This project has received funding from
 the French government through the Programme
 Investissement d'Avenir
 (I-SITE ULNE /ANR-16-IDEX-0004,
 LabEx CEMPI /ANR-11-LABX-0007-01,
ANR QuaSiDy /ANR-21-CE40-0016,
ANR PADAWAN /ANR-21-CE40-0012-01)
managed by the Agence Nationale de la Recherche.

\section{Polynomial-like maps}\label{s:PLmaps}

\subsection{Preliminary notions}

We recall here the main notions and results
about polynomial-like maps that we will use in the sequel.
We refer to \cite{DS03,DS10}
for more details.

\begin{definition}
 A \emph{polynomial-like map} is a proper holomorphic map
  $f:U\to V$, where $U \Subset V$ are open subsets of
  $\C^k$ and $V$ is convex.
  \end{definition}
  
 In particular, a
 polynomial-like map $f$ is a ramified covering $U\to V$,
 and the \emph{topological degree} $d_t$
 (i.e., the number of preimages of any point in $V$, counting the multiplicity)
 of $f$ is well-defined.
 We will always assume that
 $d_t\ge 2$. The (compact)
 set $K:= \bigcap_{n=1}^{\infty} f^{-n}(U)$ is the
 \emph{filled-in Julia set} of $f$.
It consists of the set of points whose orbit is well-defined.
The system $(K,f)$ is a well-defined
dynamical system.

Notice in particular that homogeneous
lifts to $\C^{k+1}$ of endomorphisms of $\P^k$
give polynomial-like maps.
On the other hand, while in dimension
$k=1$ any polynomial-like map
is conjugated to an actual
polynomial on the Julia set \cite{DH85},
in higher dimensions this class
is known to be much larger than
that of regular polynomial endomorphisms of
$\P^k$ (i.e., those extending holomorphically to $\P^k$),
see for instance \cite[Example 2.25]{DS10}.

 \begin{definition}\label{def:degrees}
    Let $f\colon U\to V$ be a polynomial-like map.
    For $0\le p\le k$ we set
    $$d_p=d_p(f)
    :=\limsup_{n\to\infty}\|(f^n)_*(\omega^{k-p})\|_W^{1/n}$$
    and
    \begin{equation}\label{stardegree}
    d^*_p=d^*_p(f)
    :=\limsup_{n\to\infty}\sup_S\|(f^n)_*(S)\|_W^{1/n}
    \end{equation}
       where $W\Subset V$ is a neighbourhood of $K$,
       $\omega$ is the standard K\"ahler form on $\mathbb C^k$,
       and the supremum in \eqref{stardegree} is taken
       over all positive closed $(k-p,k-p)$-currents of mass
       less than or equal to 1 on a fixed neighbourhood
       $W'\Subset V$
        of $K$. We say
       that $d_p$ and $d^*_p$ are the \emph{dynamical}
       and \emph{$*$-dynamical degrees of order $p$} of $f$, 
       respectively.
       Similarly, we define
          \begin{equation}\label{deltadegree}
    \delta_p=\delta_p(f)
    :=\limsup_{n\to\infty}\sup_X\|(f^n)_*[X]\|_W^{1/n}
    \end{equation}
    where the supremum in \eqref{deltadegree}
    is taken over all $p$-dimensional complex  analytic
    sets $X\subset V$.
  \end{definition}
  
 These definitions
  are independent of the
  open neighbourhoods
  $W,W'$.
Moreover, we have
 $d_p\le d^*_p$
  and
 $ \delta_p\le d^*_p$
  for all $0\leq p\leq k$,
$d^*_0=1$, and $d_k=\delta_k=d^*_k=d_t$.
 In the case of
 endomorphisms of $\P^k$
 of algebraic degree $d$,
  for every $ 0\leq p\leq k$
 the above
 definitions reduce to $d^*_{p} =\delta_p= d_p= d^p$.
 The next lemma
 in particular implies that
  $d_p\le \delta_p$.

\begin{lemma}[{see \cite[Lemma 4.7]{B19}}]\label{l:thesislemmaA}
Let $f\colon U\to V$ be a polynomial-like map.
Let $X$ be an analytic subset of $V$ of pure dimension  $p\le k-1$.
There exists a function $C\colon \mathbb N\to \mathbb R$ with $\limsup_{n\to\infty}C(n)^{1/n}=1$ and independent of $X$ such that
\begin{equation}\label{Xdeltap}
    \int_{{f^{-n}(V)}}[X]
    \wedge (f^{n_1})^*\omega\wedge
    \ldots\wedge (f^{n_{p}})^*\omega\le C(n)\,{\delta_p^n}
\end{equation}
and
\begin{equation}\label{omegadeltap}
    \int_{ f^{-n}(V)}
    \omega^{k-p}\wedge (f^{n_1})^*\omega\wedge\ldots\wedge (f^{n_{p}})^*\omega\le C(n) \,{\delta_p^n}
\end{equation}
where $0\le n_j\le n-1$ for all $1\le j\le p.$
In particular, the topological entropy
of the restriction of $f$ to $X$ is bounded by $\log \delta_p$.
\end{lemma}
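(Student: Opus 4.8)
The plan is to adapt the standard slicing/pushforward estimates for polynomial-like maps. First I would recall that, by definition of $\delta_p$, for every $\epsilon>0$ there is a constant $C_\epsilon$ such that $\|(f^n)_*[Y]\|_{W'} \le C_\epsilon (\delta_p+\epsilon)^n$ for every $p$-dimensional analytic set $Y\subset V$ and every $n$; a compactness/normality argument on a fixed pair of neighbourhoods $W\Subset W'\Subset V$ of $K$ makes this uniform in $Y$, and absorbing the $\epsilon$ into a subexponential factor $C(n)$ with $\limsup C(n)^{1/n}=1$ gives the clean bound $\|(f^n)_*[Y]\|_W \le C(n)\delta_p^n$. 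The key algebraic point is then the projection-formula identity
\[
\int_{f^{-n}(V)} [X]\wedge (f^{n_1})^*\omega\wedge\cdots\wedge(f^{n_p})^*\omega
= \int_V (f^{n_1})_*\big([X]\wedge (f^{n_2-n_1})^*\omega\wedge\cdots\big)\wedge\omega,
\]
which lets one trade an ambient form $(f^{n_j})^*\omega$ for a pushforward. Iterating this, together with the fact that pushing forward a positive closed $(k-p,k-p)$-current of bounded mass keeps the mass controlled by $d^*_p\le$ (a subexponential factor times) $\delta_p$ along analytic cycles, reduces everything to a single pushforward of $[X]$.

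More concretely, I would argue by reordering: assume without loss that $n_1\le n_2\le\cdots\le n_p$, write each $(f^{n_j})^*\omega = (f^{n_j})^*\omega$, and repeatedly use $(f^a)_*\big(\alpha\wedge (f^a)^*\beta\big)=(f^a)_*\alpha\wedge\beta$ to move the lowest-order pullback outward. After $p$ such steps the left-hand side of \eqref{Xdeltap} becomes $\int_W (f^{n_p})_*[X]\wedge \omega^{?}\wedge(\text{bounded positive closed currents})$; since $(f^{n_p})_*[X]$ is an analytic cycle of dimension $p$ its mass is $\le C(n_p)\delta_p^{n_p}\le C(n)\delta_p^n$, and the remaining wedge factors contribute only a bounded geometric constant coming from $\mathrm{Vol}(W)$ and $\|\omega\|$. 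The estimate \eqref{omegadeltap} is handled the same way, replacing $[X]$ by $\omega^{k-p}$ and using that $\omega^{k-p}$ can be approximated by (or dominated by a bounded multiple of the mass of) integration currents over generic $p$-codimensional linear slices, so that the slicing reduces it to \eqref{Xdeltap}; alternatively one invokes directly the definition of $d_p$ together with $d_p\le\delta_p$, which is exactly the ``in particular'' claim preceding the lemma.

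The final assertion about topological entropy follows from the estimates by a now-standard argument à la Gromov--Yomdin: the exponential growth rate of the volume of $f^n$-images of $p$-dimensional pieces of $X$ (equivalently, the integrals in \eqref{Xdeltap} with all $n_j$ ranging up to $n-1$) bounds $h_{top}(f|_X)$ from above, giving $h_{top}(f|_X)\le \log\delta_p$. I expect the main obstacle to be purely technical bookkeeping: making the constant $C(n)$ genuinely \emph{independent of $X$} while keeping $\limsup C(n)^{1/n}=1$ requires the uniform mass bound over all $p$-dimensional analytic subsets on a slightly larger neighbourhood $W'$, and one must check that each pushforward step does not push the support out of $W'$ — this is where the strict inclusions $W\Subset W'\Subset V$ and the properness of $f:U\to V$ are used, together with the fact that $f^{-n}(V)\subset U$ shrinks inside $V$. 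Since this is precisely Lemma 4.7 of \cite{B19}, I would simply cite that reference for the full details.
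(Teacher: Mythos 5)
Your outline correctly identifies the ingredients the paper uses — the uniform mass bound on $(f^n)_*[Y]$ over $p$-cycles $Y$ coming straight from the definition of $\delta_p$, the reduction of \eqref{omegadeltap} to \eqref{Xdeltap} by writing $\omega^{k-p}$ as an average of $p$-dimensional integration currents, and the Gromov volume-growth argument for the entropy consequence. These match the paper, which for \eqref{Xdeltap} itself simply points to the Gromov/Dinh--Sibony strategy and cites \cite[Lemma A.2.6]{B16} for the full details.

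However, the central step you propose to replace that citation — the iterated projection-formula reduction — does not close as written. Two problems. First, the identity you display is not the projection formula: pushing forward by $f^{n_1}$ yields
\begin{equation*}
\int [X]\wedge (f^{n_1})^*\omega\wedge\cdots\wedge(f^{n_p})^*\omega
= \int (f^{n_1})_*[X]\wedge \omega \wedge (f^{n_2-n_1})^*\omega\wedge\cdots\wedge(f^{n_p-n_1})^*\omega,
\end{equation*}
i.e.\ only $[X]$ gets pushed forward, not $[X]\wedge(f^{n_2-n_1})^*\omega\wedge\cdots$; the formula as you wrote it requires $(f^{n_2-n_1})^*\omega$ to be a pullback by $f^{n_1}$, which it is not. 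Second, and more seriously, after this first step the integrand contains $(f^{n_1})_*[X]\wedge\omega$, which is a positive closed $(k-p+1,k-p+1)$-current but no longer an analytic cycle, so the $\delta_p$-type mass bound is unavailable for the next push-forward, and the currents $(f^{n_j})^*\omega$ that remain are emphatically not ``bounded positive closed currents'' — their masses grow exponentially in $n_j$. The whole point of the estimate is that this growth has to be traded off against the cycle structure, and that trade-off is exactly what Gromov's argument does by working on the graph $\Gamma_n(X)\subset V^n$ and exploiting that its projection to the last factor is the $p$-dimensional cycle $f^{n-1}(X)$. So either flesh out that graph argument (as in the cited thesis) or invoke the reference outright; the naive iterated pushforward, as sketched, has a genuine gap.
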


\begin{proof}
The proof of \eqref{Xdeltap}
follows the strategy used by Gromov
to estimate
the topological entropy of endomorphisms of
$\mathbb P^k$, see for instance \cite{G03},
and adapted by Dinh and Sibony
\cite{DS03,DS10}
to the setting of
polynomial-like maps.
Since only minor modifications are needed, we
refer to
\cite[Lemma A.2.6]{B16} for a complete proof.
The second expression is deduced from the first
(by possibly increasing $C(n)$ by a bounded factor)
as $\omega^{k-p}$ can be written as an
average of currents of integration on $p$-dimensional
analytic sets.
\end{proof}

\begin{corollary}[{see \cite[Lemma A.2.9]{B16}}]
\label{corol-mass-0} 
Let $f\colon U\to V$ be a polynomial-like map.
Let $\nu$ be an  ergodic
 $f$-invariant 
  probability measure such
that $h_{\nu}  (f) > \log \delta_p$. Then $\nu$ gives
no mass to analytic sets of dimension 
less than or equal to $p$.
\end{corollary}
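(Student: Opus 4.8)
The plan is to argue by contradiction and reduce everything to the entropy estimate in Lemma~\ref{l:thesislemmaA}. Suppose that $\nu$ gives positive mass to some analytic set of dimension $\le p$. Since $\nu$ is supported on the compact set $K\subseteq U\Subset V$ and $f(K)=f^{-1}(K)=K$, after the usual bookkeeping with neighbourhoods of $K$ (as in \cite{DS03,DS10}) we may work throughout with analytic subsets of a fixed neighbourhood $W\Subset V$ of $K$. Let $q\le p$ be the \emph{smallest} dimension for which a positive-mass analytic set exists, and let $\mathcal D$ be the collection of \emph{irreducible} analytic sets of dimension exactly $q$ with $\nu(D)>0$. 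By minimality of $q$, two distinct elements $D,D'\in\mathcal D$ meet in a set of dimension $<q$, hence of zero $\nu$-mass; therefore $\sum_{D\in\mathcal D}\nu(D)\le 1$, so $\mathcal D$ is countable and, for each $c>0$, only finitely many $D\in\mathcal D$ satisfy $\nu(D)\ge c$.

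The next step is to extract from $\mathcal D$ an $f$-periodic element. Since $f$ is proper, hence finite, $f(D)$ is again an irreducible analytic set of dimension $q$ for every $D\in\mathcal D$ (Remmert); and from $D\subseteq f^{-1}(f(D))$ together with the $f$-invariance of $\nu$ one gets $\nu(f(D))\ge\nu(D)>0$, so $\Phi(D):=f(D)$ defines a self-map of $\mathcal D$ that does not decrease $\nu$-mass. Let $m:=\max_{D\in\mathcal D}\nu(D)$, which is attained by the finiteness above. Then $\mathcal D_m:=\{D\in\mathcal D:\nu(D)=m\}$ is finite, non-empty and $\Phi$-invariant, so it contains a $\Phi$-periodic point. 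This produces an irreducible analytic set $Y$ with $\dim Y=q\le p$, $\nu(Y)=m>0$ and $f^r(Y)=Y$ for some $r\ge1$.

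I would then promote $Y$ to a genuinely $f$-invariant analytic set carrying all of $\nu$. Set $X:=\bigcup_{j=0}^{r-1}f^{-j}(Y)$; since preimages of analytic sets under a polynomial-like map are analytic of no larger dimension, $X$ is analytic of dimension $\le p$, and $f^r(Y)=Y$ readily gives $f(X)\subseteq X$. Using $\nu(f^{-1}(Y))=\nu(Y)$ together with $Y\subseteq f^{-r}(Y)$, one checks that $X$ is $f$-invariant modulo $\nu$-null sets and has positive mass, hence $\nu(X)=1$ by ergodicity. Finally, decomposing $X$ into its finitely many irreducible components and replacing $f$ by a power permuting them trivially, the last assertion of Lemma~\ref{l:thesislemmaA} bounds the topological entropy of $f|_X$ by $\log\delta_p$; since $\nu$ is carried by $X$, the variational principle yields $h_\nu(f)=h_\nu(f|_X)\le h_{top}(f|_X)\le\log\delta_p$, contradicting $h_\nu(f)>\log\delta_p$.

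I expect the middle step to be the real obstacle: turning the mere positivity of $\nu$ on \emph{some} analytic set into an \emph{invariant} analytic set of dimension $\le p$ on which $\nu$ is fully supported. The pushforward $f(D)$ can increase the mass and genuinely change the set, so it is the finiteness of $\mathcal D_m$ — itself a consequence of the minimality of $q$ — that forces the existence of a periodic, hence eventually invariant, component. The remaining points (the bookkeeping with neighbourhoods $W\Subset V$, the gap between $f^{-1}(Y)$ and $Y$, and the reduction from a possibly reducible, non-pure-dimensional $X$ to the hypotheses of Lemma~\ref{l:thesislemmaA}) are routine but should be handled with some care.
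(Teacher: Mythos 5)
Your proof follows essentially the same strategy as the paper's: assume positive mass on an analytic set, take the minimal dimension $q$, use $f$-invariance of $\nu$ and minimality of $q$ to extract a periodic analytic set, upgrade it to a fully $f$-invariant set of full measure, and conclude via the entropy bound of Lemma~\ref{l:thesislemmaA}. The one place where you are more careful than the paper is the passage from an $f^r$-invariant analytic set $Y$ to a set of full $\nu$-mass: the paper simply ``replaces $f$ by an iterate'' and invokes ergodicity, glossing over the fact that an $f$-ergodic measure need not be $f^r$-ergodic; you instead form the $f$-invariant (mod $\nu$) union $X=\bigcup_{j=0}^{r-1}f^{-j}(Y)$ and apply $f$-ergodicity directly, which closes that small gap cleanly. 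Your auxiliary device (the countable family $\mathcal D$, finiteness of maximal-mass elements, periodic point of the induced self-map) is slightly more elaborate than the paper's direct argument on the orbit $\{f^n(X)\}$, but the content is the same — both rest on pushforward not decreasing mass plus minimality of $q$. Both you and the paper implicitly use the monotonicity $\delta_q\le\delta_p$ for $q\le p$ (true for endomorphisms, and standard in the polynomial-like setting), so that controlling the entropy on a set of pure dimension $q<p$ still contradicts $h_\nu>\log\delta_p$; this is worth noting but is not a genuine gap.
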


\begin{proof}
Let $X$ be an analytic set of dimension  at most $p$,
and assume by contradiction that $m:=\nu (X)>0$.
We can assume that $X$ is irreducible and
has pure dimension $q\leq p$. Since $\nu$ is invariant,
for all $n\in \mathbb N$ we have
$\nu (f^n (X))= \nu (f^{-n} (f^n (X)))\geq \nu(X)=m$.
Hence, we must have $\nu (f^{n_1} (X)\cap f^{n_2} (X))>0$
for some $n_1\neq n_2 \in \mathbb N$.
By the minimality of $X$, we deduce that
$f^{n_1} (X)= f^{n_2} (X)$.
Up to replacing $f$ with an iterate,
we can then assume that $X$ is
invariant and such that $\nu (X)>0$.
The ergodicity of $\nu$
implies that $\nu(X)=1$.
This implies that $h_\nu  (f)$ is smaller than
 or equal to the
topological entropy of the restriction of $f$ to $X$.
Since this is bounded by $\log \delta_p$ by
Lemma \ref{l:thesislemmaA}, the assertion follows.
\end{proof}

 We will focus in this paper
 on maps satisfying the following condition \cite{DS10}.
 Observe that the condition is always satisfied by lifts
 of endomorphisms, and moreover it is stable by small
 perturbation of the coefficients
 (since $d^*_{k-1}(f)$ depends upper semicontinuously from the map $f$).
This gives large families of examples.

 \begin{definition}\label{def:large-top-deg}
    We say that a polynomial-like map $f$
   has  
    \emph{large
   topological degree}
   if 
     $d^*_{k-1}<d_t$.
 \end{definition}

  Polynomial-like
 maps of large topological degree
enjoy many of the dynamical properties
of endomorphisms
(however, their study is usually technically
more involved, because of the lack of a naturally defined
Green function).
For instance, they admit a unique measure of maximal
entropy $\log d_t$, whose Lyapunov exponents (see below)
are strictly positive.
We denote this measure by $\mu$ and
define the Julia set as the support of $f$. Observe that
$J$ is a subset of the boundary of $K$.

\smallskip

In the current paper, we will often need that $f$ satisfies the (a priori) stronger property that
 $\max\{d_0^*,\ldots,d_{k-1}^*\}<d_t$. The following result by Dinh and the authors
 implies that this is always the case for maps of large topological degree.
 
\begin{proposition}[{see  \cite[Theorem 1.3]{BDR23}}]
\label{prop:BDR}
    Let $f$ be a polynomial-like map.
    Then the sequence $\{d_j^*\}_{0\leq j \leq k}$ is non-decreasing. In particular, if $f$
    has large topological degree then \[\max\{d_0^*,\ldots,d_{k-1}^*\} = d^*_{k-1} <d_t.\]
\end{proposition}

\subsection{Strict positivity of the Lyapunov exponents}
Let $f\colon U\to V$ be a polynomial-like map and
$\nu$ 
an ergodic $f$-invariant 
 probability
measure.
By Oseledets 
Theorem \cite{O68},
 $\nu$  admits $k$ \emph{Lyapunov exponents}
(counting
multiplicity, and admitting the possible value of $-\infty$).
We will denote them by
$$-\infty\le\chi_l< \chi_{l-1}<\ldots< \chi_1,$$
with multiplicity $m_j\geq 1$, $j=1,\ldots,l$ respectively.
Note that $m_1+m_2+\ldots+m_l=k$. As soon as
the 
 measure-theoretic
entropy of $\nu$ is positive,
 its
largest Lyapunov exponent
 is strictly positive by Ruelle inequality \cite{Ru78}
(the result is stated for compact manifolds, but
the proof applies here too,
by triangulating a neighbourhood of the filled Julia set $K$).

The following is the  main result of this section.

\begin{theorem}\label{t:lyapunov-positive}
Let  $f\colon U\to V$
be a polynomial-like
map of 
 large
topological degree.
 Let $\nu$ be an ergodic $f$-invariant
 probability
measure satisfying $h_\nu (f)>\log d^*_{ k-1} (f)$.
Then
\begin{enumerate}
\item\label{i:2k}
all the Lyapunov exponents of $f$
with respect to $\nu$
are
larger than or equal to
$(h_{\nu} (f)-\log d^*_{ k-1} )/(2m_l)>0$
 (where $m_l$ is the multiplicity of the smallest Lyapunov exponent of $\nu$);
\item
the function $\log|\jac f|$ is integrable
with respect to $\nu$.
\end{enumerate}
\end{theorem}

The second assertion
follows from the first, since
the sum of the Lyapunov exponents is equal to
$\langle \nu, \log |\jac f|\rangle$ 
 by Birkhoff Theorem, 
so
this last integral
is finite as soon as all Lyapunov exponents are finite.
Hence, we only need to
prove the first assertion. This is
known in the case of
endomorphisms of
$\Pk$, see
  de Thélin \cite{dTh08}
and Dupont \cite{D12}.
When $\nu$ is the measure of
maximal entropy,
it is a result of Dinh-Sibony \cite{DS03}, see \cite{BD99}
for the case of endomorphisms of $\mathbb P^k$.
Although we will follow the strategy of the proof
de Thélin and Dupont,
 because of the lack of a Hodge theory in this setting,
we will need to replace some cohomological arguments
when working with polynomial-like maps.
 We also cannot exploit the linearity of the sequence $\{d^{*}_p\}_{1\leq p\leq k}$  of the dynamical degrees, see also Remark \ref{r:logcon}.

\smallskip

As in \cite{D12},
the assertion
will
be deduced from the following
estimate. Recall that $l\geq 1$ is the number of distinct Lyapunov exponents of $\nu$.

\begin{theorem}\label{theoremD}
Let $f\colon U\to V$
be a polynomial-like
map and let $\nu$ be an ergodic $f$-invariant
 probability
measure.
Then,
if $l\ge 2$,
for every $2\le j\le l$, we have
\begin{equation}\label{inequalityD}
    h_\nu(f)\le \log \max_{1\le i\le k-l_j} \delta_{i}+2m_j\chi^+_j+\ldots+2m_l\chi^+_l
\end{equation}
  where we set $\chi^+:=\max\{\chi,0\}$, and $l_j=m_j+\ldots +m_l$.
\end{theorem}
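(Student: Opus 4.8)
The plan is to adapt the Gromov-type volume estimate, combined with Yomdin's theory and the subadditivity of entropy along a filtration of the Oseledec splitting, exactly as in de Thélin and Dupont, but replacing the cohomological computation of the degrees of the iterated preimages by the purely metric estimate provided by Lemma \ref{l:thesislemmaA}. First I would fix $j$ with $2\le j\le l$ and work on a full-measure set where the Oseledec decomposition $T_x\C^k=E_j^+(x)\oplus E_j^-(x)$ is defined, where $E_j^+$ collects the $m_j+\dots+m_l$ smallest exponents $\chi_j>\dots>\chi_l$ (of total dimension $l_j$) and $E_j^-$ the remaining $k-l_j$ exponents. The key analytic object is, as in \cite{D12}, the graph $\Gamma_n\subset (U)^{n}$ of the first $n$ iterates over a small piece of an "admissible" manifold tangent to a mild cone around the directions $E_j^+$, and the goal is to bound the volume growth of these graphs.

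The main steps I would carry out, in order, are: (1) choose a generic point and a small $l_j$-dimensional holomorphic disc $D$ through it whose tangent space stays in a narrow cone around $E_j^+$ along the orbit (Pesin theory gives this on a positive-measure set, after passing to an iterate and using a Lusin-type regularity of the splitting); (2) estimate $\mathrm{Vol}(f^{n_i}(D))$ from above: along $D$ the differential expands by at most roughly $e^{n(\chi_j^+ +\epsilon)}$ in each of the $l_j$ directions, so the pulled-back Kähler forms $(f^{n_i})^*\omega$ restricted to the relevant graph contribute the factor $e^{2n(m_j\chi_j^+ +\dots+m_l\chi_l^+)+n\epsilon}$ after accounting for multiplicities; (3) estimate the "horizontal" mass — the part measured by the degrees $\delta_i$ for $1\le i\le k-l_j$ — from above by $C(n)\,(\max_{1\le i\le k-l_j}\delta_i)^n$ using \eqref{Xdeltap}–\eqref{omegadeltap} of Lemma \ref{l:thesislemmaA}, which is precisely the ingredient that replaces the cohomological degree bound available only on $\P^k$; (4) combine (2) and (3) via the Cauchy–Schwarz/mixed-degree decomposition of $\omega^{k}$ on the graph $\Gamma_n$ to get $\mathrm{Vol}(\Gamma_n)\le C(n)\,(\max_{1\le i\le k-l_j}\delta_i)^n e^{2n\sum_{i\ge j} m_i\chi_i^+ + n\epsilon}$; (5) apply Yomdin's theorem / the Newhouse–Buzzi local-entropy estimate: the metric entropy $h_\nu(f)$ is bounded by the exponential growth rate of the volume of such graphs over generic discs, i.e.\ by $\log\big(\limsup_n \mathrm{Vol}(\Gamma_n)^{1/n}\big)$, and since $\limsup_n C(n)^{1/n}=1$ and $\epsilon$ is arbitrary we obtain \eqref{inequalityD}.

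The hard part will be Step (3)–(4): making sure that the metric estimate of Lemma \ref{l:thesislemmaA}, which is stated for currents of integration $[X]$ and for $\omega^{k-p}$ but not obviously for the current of integration on the iterated graph $\Gamma_n$ intersected with the "vertical" Pesin disc, can be invoked with a constant $C(n)$ that is genuinely independent of the disc $D$ and of the choice of exponents $n_i\in\{0,\dots,n-1\}$ — this is exactly why the uniformity in $X$ in Lemma \ref{l:thesislemmaA} was isolated. A secondary technical point is the passage from positive-measure Pesin sets to a genuine entropy bound: one must handle the dependence of the Pesin disc size on the base point (the Lusin regularization), choose the discs so that the induced partition sees a definite fraction of the entropy, and let the Pesin set exhaust a full-measure set; this is routine given \cite{dTh08,D12} but needs to be transcribed carefully to the polynomial-like setting where there is no global Green function and all estimates must be localized on a fixed neighbourhood $W\Subset V$ of $K$.
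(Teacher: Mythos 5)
Your overall strategy is the one the paper takes: adapt de Th\'elin--Dupont's Gromov-type volume argument with Pesin discs, and replace the cohomological bound on the volume growth of iterated graphs by the purely metric bound of Lemma~\ref{l:thesislemmaA}. You also correctly isolated the key uniformity issue (the constant $C(n)$ in Lemma~\ref{l:thesislemmaA} must be independent of the analytic set $X$ and of the choice of exponents $n_i$), which is exactly what is used.

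Some of the supporting tools you name, however, are not the ones actually needed, and this is worth flagging. First, the paper does not re-derive the Pesin-disc construction via Lusin regularization; it simply invokes Proposition~\ref{propositiondupond} (Dupont's construction of a maximal $(n,r_0)$-separated set $\mathcal E_n$ of cardinality $\ge e^{n(h_\nu-2\varepsilon)}$ together with discs $\Psi_z^j(U_z^j)$ of dimension $p+l_j$, small forward diameter, and an explicit volume lower bound $\ge e^{-n(2m_j\chi_j^+ +\dots)}e^{-8kn\varepsilon}$) verbatim, observing that its proof is purely local and therefore transfers unchanged from $\P^k$ to the polynomial-like setting. Second, the step you attribute to Yomdin/Newhouse--Buzzi is in fact Gromov's Lelong-inequality argument in the opposite direction: one fixes the orthogonal projection $\sigma\colon\C^k\to\C^{p+l_j}$, and for each $a$ bounds $\mathrm{vol}(\Gamma_n(a))\ge\mathrm{Card}(\Psi_n^j\cap\sigma^{-1}(a))$ because the fibre of $\Psi_n^j$ over $a$ is $(n,r_0/2)$-separated; integrating over $a$ then produces the lower bound $e^{n(h_\nu-2\varepsilon)}e^{-n(2m_j\chi_j^++\dots)}e^{-8kn\varepsilon}$. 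Yomdin controls volume growth by entropy, which is the wrong direction here. Third, there is no Cauchy--Schwarz or mixed-degree manipulation in the combination step: one just expands $\mathrm{vol}(\Gamma_n(a))$ as $\sum_{0\le n_i\le n-1}\int_{f^{-n}(V)}[\sigma^{-1}(a)]\wedge(f^{n_1})^*\omega\wedge\dots\wedge(f^{n_{k-p-l_j}})^*\omega$, applies \eqref{Xdeltap} termwise to get the upper bound $\alpha(n)\,\delta_{k-p-l_j}^n$, compares with the lower bound, and lets $n\to\infty$, $\varepsilon\to 0$. None of these discrepancies changes the architecture of the argument, but the proof you would write following your plan would take detours (Yomdin, Cauchy--Schwarz) that are neither necessary nor well adapted to this situation, whereas the direct Lelong/Gromov route goes through cleanly.
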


If $h_{\nu} (f)=0$,
the assertion is clear. So we can assume
that $h_{\nu} (f)>0$. As we mentioned above we have
$\chi_1 >0$ by Ruelle inequality. Let  $1\le q\le l$ be such that
$q:=\max\{i: \chi_i>0\}$. Since the same proof below works for
the case  $l=q$, without loss of generality we
can assume that $l>q$. It is not difficult to see that the
 right-hand side of \eqref{inequalityD}, seen as a function of $j$,
 is non-decreasing
 for $j\ge q+1$, hence
 it is enough to prove Theorem \ref{theoremD}
 for $2\le j\le q+1$.

\begin{proposition}[{see \cite[Proposition 6.3]{D12}}]\label{propositiondupond} Let $f\colon U\to V$
be a polynomial-like map.
Fix $0<\beta_0\le1$. For every $\varepsilon>0$ there exists
$r_0>0$ and $n_0\in \mathbb N$ and, for any $n\ge n_0$,
a maximal $(n, r_0)$-separated  set $\mathcal{E}_n$
with $\mathrm{Card} (\mathcal{E}_n)\ge  e^{n(h_\nu(f)-2\varepsilon)}$
and such that, for any $z\in \mathcal{E}_n$ and
every $2\le j\le q+1$, there exists a neighbourhood $U_z^j$
of the origin
  of $\mathbb{D}^{l_j}$ 
and an injective mapping
  $\Psi_z^j: U_z^j\to f^{-n}(V)$  which satisfies the
  following properties:
    \begin{enumerate}
    \item $\Psi_z^j(0)=z$ and $\mathrm{Lip} \Psi_z^j\le \beta_0$;
    \item $\diam f^i(\Psi^j_z(U_z^j))\le e^{-n\varepsilon}$,
    for $0\le i\le n-1$;
    \item $\mathrm{Vol}(\Psi_z^j(U_z^j))
    \ge e^{-n(2m_j\chi^+_j+\ldots+2m_q\chi^+_q)}e^{-8kn\varepsilon}$ for $j\leq q$
    and  $\mathrm{Vol}(\Psi_z^{q+1}(U_z^{q+1}))
    \ge e^{-8kn\varepsilon}$.
  \end{enumerate}
\end{proposition}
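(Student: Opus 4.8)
The plan is to follow closely the strategy of Dupont \cite[Proposition 6.3]{D12}, adapting the few steps where cohomological/Green-function arguments specific to $\P^k$ enter so that only the currents-theoretic input from Lemma \ref{l:thesislemmaA} is used. First I would fix $\varepsilon>0$ and invoke the Katok--Oseledec local theory: since $\nu$ is ergodic with Lyapunov exponents $\chi_1>\dots$, on a Pesin set $\Lambda_\varepsilon$ of measure close to $1$ one controls, up to multiplicative errors $e^{\pm n\varepsilon}$, the contraction/expansion along the Oseledec splitting. Using the Brin--Katok formula for the entropy together with a maximal $(n,r_0)$-separated set $\mathcal E_n$ meeting a dynamical ball of $\nu$-measure $\geq e^{-n(h_\nu-\varepsilon)}$, one gets $\mathrm{Card}\,\mathcal E_n \geq e^{n(h_\nu-2\varepsilon)}$ for $r_0$ small and $n$ large. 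This part is insensitive to the polynomial-like versus endomorphism distinction.

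Next, for a fixed $2\le j\le q+1$ and a point $z\in\mathcal E_n$ in the Pesin set, one builds the chart $\Psi_z^j$ by graph-transform / Pesin-manifold techniques: the domain $U_z^j\subset \mathbb D^{p+l_j}$ corresponds to the directions in the Oseledec splitting associated to the exponents $\chi_j,\dots,\chi_q$ (the "$l_j$" slow-but-positive directions) together with the $p$ directions of the nonpositive exponents $\chi_{q+1},\dots,\chi_l$. Along the nonpositive block one has no expansion, so after $n$ iterates the volume there is not contracted by more than $e^{-8kn\varepsilon}$ (the polynomial Pesin error), which gives the factor $e^{-8kn\varepsilon}$ in (3) and, for $j=q+1$ where $l_{q+1}$ is empty, the full lower bound $\mathrm{vol}\,\Psi_z^{q+1}(U_z^{q+1})\ge e^{-8kn\varepsilon}$. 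Along the positive block $\chi_j,\dots,\chi_q$ the image of a disc of the right radius is contracted under $f^n$ by at most $e^{-n(2m_j\chi_j^+ +\dots+2m_q\chi_q^+)}e^{-8kn\varepsilon}$ after absorbing Pesin errors and the reparametrization; shrinking $r_0$ guarantees $\mathrm{Lip}\,\Psi_z^j\le\beta_0$ and $\diam f^i(\Psi_z^j(U_z^j))\le e^{-n\varepsilon}$ for $0\le i\le n-1$. Injectivity of $\Psi_z^j$ is ensured because the images of distinct sheets stay inside disjoint dynamical balls of radius $r_0$, which is exactly the $(n,r_0)$-separation.

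The step I expect to be the genuine obstacle — and the only place where the polynomial-like setting forces a change — is controlling the size of the preimage domain $U_z^j$ from below, i.e.\ ensuring it contains a ball of a definite radius and simultaneously bounding the number of "bad" sheets to be discarded. In \cite{D12} this is handled via a volume estimate for $(f^n)^*\omega\wedge\dots$ read cohomologically on $\P^k$; here I would instead use Lemma \ref{l:thesislemmaA}, specifically inequality \eqref{omegadeltap}, which bounds $\int_{f^{-n}(V)}\omega^{k-p}\wedge (f^{n_1})^*\omega\wedge\dots\wedge(f^{n_p})^*\omega$ by $C(n)\,\delta_p^n$ with $\limsup C(n)^{1/n}=1$. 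Choosing $p$ to be the dimension of the nonpositive block and the remaining exponents appropriately, this lets one bound, for the relevant $i$, the total number of sheets on which the chart could degenerate by $\mathrm{Card}\,\mathcal E_n \cdot$ (something of subexponential growth times $\max_{1\le i\le k-l_j}\delta_i^n$ eventually divided out in Theorem \ref{theoremD}), so that a positive proportion of $z\in\mathcal E_n$ admits the desired $\Psi_z^j$; after relabelling one may assume all of $\mathcal E_n$ does, at the cost of a further $e^{-n\varepsilon}$ loss absorbed into $h_\nu-2\varepsilon$. The rest is the routine bookkeeping of Pesin error exponents to match the precise constants $8k$ and $2m_i$ in the statement.
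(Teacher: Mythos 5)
Your Pesin--Oseledec--Katok outline for the construction of the separated set $\mathcal E_n$ and the charts $\Psi_z^j$ is essentially the right picture, and it matches the local part of Dupont's argument. However, the step you single out as the ``genuine obstacle'' --- bounding the size of the preimage domains and the number of bad sheets via a cohomological volume estimate, replaced here by Lemma \ref{l:thesislemmaA} --- does not belong to the proof of Proposition \ref{propositiondupond} at all. You are conflating this proposition with Theorem \ref{theoremD}.

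Proposition \ref{propositiondupond} is a purely local statement: the existence of the $(n,r_0)$-separated set, the graph-transform construction of $\Psi_z^j$ along the Oseledec blocks, the Lipschitz bound, the diameter bound, and the volume lower bounds all come from Pesin theory and Oseledec's theorem alone, with no use of the ambient geometry of $\P^k$ nor of the dynamical degrees. That is exactly why the paper's proof is a one-line remark that the statement is local and Dupont's proof transfers verbatim to the polynomial-like setting. The volume estimate of Lemma \ref{l:thesislemmaA} --- and the place where one must replace Dupont's cohomological computation on $\P^k$ --- enters only afterwards, in the proof of Theorem \ref{theoremD}, where the upper bound $\mathrm{vol}(\Gamma_n(a)) \le C(n)\,\delta_{k-p-l_j}^n$ is what limits the total mass that the projected charts can carry and yields the entropy inequality. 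Your plan would therefore import into Proposition \ref{propositiondupond} machinery that is not needed and does not naturally fit there (for instance, discarding sheets and ``relabelling so that all of $\mathcal E_n$ works'' is not part of, nor necessary for, the local chart construction). The correct observation is that no adaptation is required for this proposition; the adaptation is confined to Theorem \ref{theoremD}.
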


Recall that a set $A$
is $(n,r_0)$-separated, if for all $x,y\in A$  we have
\[\max_{0\leq i \leq n-1} |f^{i} (x) - f^i (y)|> r_0\]
(we assume here that $|f^i (x)-f^{i}(y)|>r_0$ if at least one among $f^{i}(x)$ and $f^i (y)$ is not defined).

\begin{proof}
The statement being local, the same proof
as in \cite{D12} applies here.
\end{proof}

\begin{proof}[{Proof of Theorem \ref{theoremD}}]
We keep the notations of Proposition \ref{propositiondupond}
and set $\Psi^j_n:=\cup_{z\in\mathcal{E}_n}\Psi_z^j(U_z^j)$.
Up to taking suitable local charts,
 we can assume that  the $\Psi_z^j(U^j_z)$'s are graphs
 above $\sigma( f^{-n}(V))$ where $\sigma: \mathbb{C}^k\to \mathbb{C}^{l_j}$ is the orthogonal projection.
Let $\omega:=dd^c\|z\|^2$ be the standard K\"{a}hler
$(1,1)$-form on $V$.  For $a\in \sigma( f^{-n}(V))$, set  $$\Gamma_n(a):=\{(z,f(z),\ldots,f^{n-1}(z)),z\in \sigma^{-1}(a)
\cap f^{-n+1}(V)\}\subset V^{n}.$$
Then $\mathrm{Vol} (\Gamma_n(a))
=\int_{\Gamma_n(a)}\omega_n^{k-l_j}$,
where $\omega_n=\sum_{i=1}^{n}\Pi_i^*\omega$
and, for every $1\le i\le n$, 
$\Pi_i$ denotes
the projection of
$V^n$ onto its $i$-th factor.
Since $\mathcal{E}_n$ is  $(n, r_0)$-separated and
 $\diam f^i(\Psi_z^j(U_z^j))\le e^{-n\varepsilon}$
 the set $\Psi^j_n\cap \sigma^{-1}(a)$ is
 $(n, r_0/2)$-separated.
 Lelong inequality implies
 that $\mathrm{Vol} (\Gamma_n(a))\ge \mathrm{Card}\left(\Psi^j_n\cap \sigma^{-1}(a)\right)$,
 see for instance \cite{G03}.
By using  Proposition \ref{propositiondupond} we get

\begin{equation}\label{eq:vol-1}
\begin{aligned}
 \int_{a\in \sigma( f^{-n}(V))}
 \mathrm{Vol}  (\Gamma_n(a)) da 
 &\ge
   \int_{a\in \sigma( f^{-n}(V))}
   \mathrm{Card} \left(\Psi^j_n\cap \sigma^{-1}(a)\right) da \\
   & =  \mathrm{Vol} (\sigma(\Psi^j_n))
    \ge e^{n(h_\nu-2\varepsilon)}
   e^{-n(2m_j\chi^+_j+\ldots+2m_q\chi^+_q)}e^{-8kn\varepsilon}.
\end{aligned}
\end{equation}
On the other hand, we also have
$$\mathrm{Vol} (\Gamma_n(a))=\sum_{0\le n_i\le n-1} 
\int_{ f^{-n}(V)}
[\sigma^{-1}(a)]\wedge (f^{n_1})^*\omega
\wedge\ldots\wedge (f^{n_{k-l_j}})^*\omega.$$
By Lemma \ref{l:thesislemmaA}, 
and since the sum in the last expression contains $n^{k-l_j}$ terms, there exists a function
$C\colon \mathbb N\to \mathbb R$ independent of $a$
such that $\limsup\limits_{n\to\infty} C(n)^{1/n}=1$
and,  for all $n\in \mathbb N$,
$$\mathrm{Vol} (\Gamma_n(a)) \le  C(n)\, \delta_{k-l_j}^n.$$
Hence,  for all $n\in \mathbb N$,
we have
\begin{equation}\label{eq:vol-2}
\int_{a\in \sigma( f^{-n}(V))} \mathrm{Vol} (\Gamma_n(a))da
\le C(n)\,\delta_{k-l_j}^n\int_{a\in \sigma( f^{-n}(V))}da 
\le \alpha(n)\, \delta_{k-l_j}^n\end{equation}
where
$\alpha(n):=C(n)\int_{a\in\sigma(V)}da=C(n) \,\mathrm{Vol} (\sigma(V))$ (notice that $\sigma( f^{-n}(V))\subset \sigma(V)$).
In particular, we have 
$\limsup\limits_{n\to\infty} \alpha(n)^{1/n}=1$.

Combining the two inequalities
\eqref{eq:vol-1} and \eqref{eq:vol-2},
we deduce that for any $\varepsilon>0$ there
exists an integer $n_0$ such that for any $n>n_0$ we have
$$\log \delta_{k-l_j}+\frac1n \log \alpha(n) \ge h_\nu(f)-(2m_j\chi^+_j+\ldots+2m_q\chi^+_q)-(8k+2)\varepsilon.$$
By letting $n\to\infty$ and $\varepsilon\to 0$,
we have
\begin{align*}
h_\nu(f) 
  & \le \log \delta_{k-l_j}+2m_j\chi^+_j+\ldots+2m_q\chi^+_q \\
   & \le \log \max_{1\le i\le  k-l_j} \delta_{i}+2m_j\chi^+_j+\ldots+2m_q\chi^+_q.
\end{align*}
The proof is complete.
\end{proof}

\begin{proof}[{End of the proof of Theorem \ref{t:lyapunov-positive}}]
Let us first assume that $f$
admits $l\geq 2$ distinct Lyapunov exponents.
By using  \eqref{inequalityD} 
and Proposition \ref{prop:BDR}
we have
\begin{equation}\label{eq:chillehnu}
    \chi^+_l
\geq
\frac{h_\nu(f)- \log \max_{1\le i\le k-m_l}\delta_i}{2m_l}
\geq
\frac{h_\nu(f)- \log d^*_{k-1}}{2{m_l}},
\end{equation}
which concludes the proof in this case.

Assume now that all the Lyapunov exponents are equal to $\chi$.
In particular, we have $l=1$ and $m_l=m_1=k$.
By Ruelle inequality,
we have $h_\nu { (f)} \leq 2k \chi$,
hence the desired estimate in this case also follows.
\end{proof}

\begin{remark}\label{r:logcon}
  In the case of endomorphisms of $\mathbb P^k$ (see \cite{dTh08}
and \cite{D12})  the denominator in Theorem \ref{t:lyapunov-positive} 
\eqref{i:2k}
can be taken to be equal to $2$ instead of $2 m_l$, even when $m_l>1$. This is a consequence of the log-concavity of the sequence $\{d^*_p\}_{0\leq p\leq k}$
(which in that case is actually linear).

Let $f$ be a polynomial-like map as in Theorem \ref{t:lyapunov-positive} and assume that the sequence $\{d_p^*\}_{0\leq p \leq k}$ is
log-concave, i.e., that we have
$d_{p-1}^*\cdot d_{p+1}^*\le (d_p^*)^2$ for $1\le p\le k-1$.
Then, the denominators  $2m_{l}$ can also be replaced by $2$ in Theorem \ref{t:lyapunov-positive} \eqref{i:2k}. 
 Indeed, the log-concavity implies that
$(d_k^*)^{m-1}\cdot d^*_{k-m}\le (d_{k-1}^*)^{m}$
for all $1\le m\le k$. 
By the last inequality and the fact that $h_\nu(f)\le \log d_t=\log d_k^*$ we obtain
$$ (m-1)  h_\nu(f)\le (m-1)  \log d_k^*\le \log\frac{(d_{k-1}^*)^{m}}{d^*_{k-m}}=m \log d_{k-1}^*-\log{d^*_{k-m}} $$
and hence
\begin{equation}\label{eq:m-1entlogd}
\frac{h_\nu(f)- \log d^*_{k-m}}{2m}\geq
\frac{h_\nu(f)- \log d_{k-1}^*}{2}.
\end{equation}

Let us first assume that $f$ admits $l\ge 2$ distinct Lyapunov exponents,  and let $m_l$ be
 the multiplicity of the smallest Lyapunov exponent,
as above. Since $\max_{1\le i\le k-m_l}\delta_i\le 
d^*_{k-m_l}$
 by Proposition \ref{prop:BDR}, thanks to \eqref{eq:m-1entlogd} we have
 $$\frac{h_\nu(f)- \log \max_{1\le i\le k-m_l}\delta_i}{2m_l}
\ge\frac{h_\nu(f)- \log d^*_{k-m_l}}{2m_l}\geq
\frac{h_\nu(f)- \log d_{k-1}^*}{2}.$$
This permits to improve the second inequality in \eqref{eq:chillehnu}, and proves the assertion in this case.

 Assume now that all the Lyapunov exponents are equal to $\chi$. Then, thanks to the arguments in the last lines of
 the proof of Theorem \ref{t:lyapunov-positive}, it is enough to prove the inequality $\frac{h_\nu(f)- \log d_{k-1}^*}{2}\le \frac{h_\nu(f)}{2k}$. Since this 
is a consequence of \eqref{eq:m-1entlogd} applied with $m=k$
(recall that $d_{0}^*=1$), 
the assertion follows in this case, too. 
\end{remark}

\section{Holomorphic families of polynomial-like maps}\label{s:fplmaps}

\subsection{General definitions}
We will consider
in all this paper holomorphic
families of polynomial-like maps. These are defined
as follows, see \cite{Pha05,DS10}.

\begin{definition}
Let $M$ be a complex manifold and $\mathcal{U},\mathcal{V}$
be connected open subsets of $M\times \mathbb{C}^k$ such that $\mathcal{U}{ \subset} \mathcal{V}$.
Let $\pi_M\colon M\times \mathbb{C}^k\to M$ be the
standard projection. Assume that for every $\lambda\in M$
we have
$\emptyset\ne U_\lambda\Subset V_\lambda\Subset\mathbb{C}^k$
with $U_\lambda$ connected and $V_\lambda$
convex, where  $U_\lambda:=\mathcal{U}\cap \pi^{-1}_M(\lambda)$
and $V_\lambda:=\mathcal{V}\cap \pi^{-1}_M(\lambda)$.
Assume also that $U_\lam$ and $V_\lambda$
depend continuously on $\lam$.
 A \emph{holomorphic family of polynomial-like maps}
 is a proper
  holomorphic map
  $f:\mathcal{U}\to\mathcal{V}$ of the form
  $(\lambda,z)\mapsto (\lambda,f_\lambda(z))$.
\end{definition}

From the definition, $f$ has a
well-defined topological degree $d_t$.
We will always assume that $d_t\ge 2$. All the maps
$f:U_\lam\to V_\lam$ are polynomial-like
with the same topological
degree $d_t$. We
denote by
$\mu_\lambda, J_\lambda$, and $K_\lambda$
the equilibrium measure, the Julia set, and  the filled
Julia set of $f_\lambda$, respectively.
Since
the function
$\lam \to K_\lam$ is upper semicontinuous
for the Hausdorff topology,
when working locally near a given $\lam_0 \in M$
we
can assume without loss of generality that
$\mathcal V$  is equal to
$M \times V$
 for some open and convex subset  $V\Subset \C^k$.
We denote by $C_f$ the critical set of $f$.
Observe that the current of integration
$[C_f]$ is given by
$dd^c \log |\jac f|$, where $\jac f$ is the Jacobian of $f$.

The following result is due to Pham \cite{Pha05}
in the case of $\nu_\lam= \mu_\lam$, see also
\cite{DS10}.

\begin{proposition}\label{p:pham}
Let 
$(f_\lam)_{\lam \in M}$
be a holomorphic family of polynomial like maps
of large topological degree. Let $\mathcal R$ be a horizontal
positive closed current on $ \mathcal V= M\times V$
of bidegree $(k,k)$.
Assume that, for all $\lam \in M$, the slice measure
$\nu_\lam := \mathcal R_\lam$ is 
 an ergodic $f_\lam$-invariant   measure.
Let $L_1 (\lam)\geq \ldots \ge L_k (\lam)$ denote the
$k$ Lyapunov exponents of
$\nu_\lam$, counting multiplicities, and,
for every $1\leq \ell\leq k$
let
\[
\Sigma_{\ell} (\lam) := \sum_{j=1}^{\ell} L_j (\lam)
\]
be the sum of the $\ell$ largest exponents
of $\nu_\lam$. If there exists $\lam_0 \in M$
such that
$L_k(\lambda_0)$ is finite, then, for all $1\leq l \leq k$,
the function $\Sigma_\ell (\lam)$
is plurisubharmonic (psh)
on $M$.
\end{proposition}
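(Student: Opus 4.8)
The plan is to express $\Sigma_l(\lam)$ as a monotone limit of functions of the form $\lam\mapsto\tfrac1n\langle\mathcal R_\lam,\varphi_{l,n}\rangle$ with $\varphi_{l,n}$ plurisubharmonic, and then to invoke a slicing lemma guaranteeing that integrating a psh function against the slices of a horizontal positive closed current produces a psh function of the parameter. Set
\[
\varphi_{l,n}(\lam,z):=\log\big\|\wedge^l D_z f_\lam^n\big\|
\qquad\text{and}\qquad
b_n(\lam):=\langle\mathcal R_\lam,\varphi_{l,n}\rangle=\int\varphi_{l,n}(\lam,z)\,d\nu_\lam(z).
\]
By Oseledec's theorem applied to the cocycle $z\mapsto D_z f_\lam$ over $(K_\lam,f_\lam,\nu_\lam)$, the sum of the $l$ largest Lyapunov exponents of $\nu_\lam$ equals $\lim_{n\to\infty}\tfrac1n\int\log\|\wedge^l D_zf_\lam^n\|\,d\nu_\lam$, the convergence being also in $L^1(\nu_\lam)$ by Kingman's subadditive ergodic theorem (here $\log^+\|\wedge^l D_zf_\lam\|$ is bounded, $f_\lam$ being polynomial-like); hence $\Sigma_l(\lam)=\lim_n\tfrac1n b_n(\lam)$. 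Moreover, the chain rule $D_zf_\lam^{n+m}=(D_{f_\lam^m z}f_\lam^n)(D_zf_\lam^m)$, submultiplicativity of the operator norm under $\wedge^l$, and $f_\lam$-invariance of $\nu_\lam$ give $b_{n+m}(\lam)\le b_n(\lam)+b_m(\lam)$; consequently $\big(2^{-j}b_{2^j}(\lam)\big)_{j\ge0}$ is non-increasing and converges to $\Sigma_l(\lam)$. Since a pointwise decreasing limit of psh functions is psh or identically $-\infty$ on each connected component, it suffices to prove that each $2^{-j}b_{2^j}$ is psh and that the limit is not $\equiv-\infty$ on the component of $\lam_0$.

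First, $\varphi_{l,n}$ is psh on a neighbourhood of $\Supp\mathcal R$. The iterate $f^n$ of the holomorphic map $f\colon\mathcal U\to\mathcal V$ is holomorphic on the open set $\mathcal U_n:=\{(\lam,z):z,f_\lam(z),\dots,f_\lam^{n-1}(z)\in U_\lam\}$, which contains $\bigcup_\lam(\{\lam\}\times K_\lam)$ and hence $\Supp\mathcal R$ (each $\nu_\lam=\mathcal R_\lam$, being $f_\lam$-invariant, is supported in $K_\lam$). On $\mathcal U_n$ the fibre derivative $D_zf_\lam^n$ is a holomorphic matrix-valued function of $(\lam,z)$, so the section $(\lam,z)\mapsto\wedge^l D_zf_\lam^n$ is holomorphic, and $\varphi_{l,n}$, being the logarithm of the norm of a holomorphic section, is psh on $\mathcal U_n$. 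In the special case $l=k$, $n=1$ this reduces to $\varphi_{k,1}(\lam,z)=\log|\det D_zf_\lam(z)|=\log|\jac f(\lam,z)|$, the potential of $[C_f]$, and the argument below then gives directly that $\Sigma_k(\lam)=\langle\mathcal R_\lam,\log|\jac f_\lam|\rangle$ is psh, with no limiting procedure needed.

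Second, the key fact — which is essentially Pham's argument \cite{Pha05}, see also \cite{DS10} — is that if $\psi$ is psh on a neighbourhood of $\Supp\mathcal R$, then $\lam\mapsto\langle\mathcal R_\lam,\psi\rangle$ is psh on $M$ (or $\equiv-\infty$ on some components). Indeed, horizontality of $\mathcal R$ makes $\pi_M$ proper on $\Supp\mathcal R$, so $(\pi_M)_*(\psi\mathcal R)$ is a well-defined current of bidimension $(\dim M,\dim M)$ on $M$, i.e.\ a distribution, which by the slicing formula is represented by the function $\lam\mapsto\langle\mathcal R_\lam,\psi\rangle$; and since $\mathcal R$ is positive and closed, $dd^c(\pi_M)_*(\psi\mathcal R)=(\pi_M)_*(dd^c\psi\wedge\mathcal R)\ge0$. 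Upper semicontinuity of $\lam\mapsto\langle\mathcal R_\lam,\psi\rangle$ follows by writing it as the decreasing limit over $c\to+\infty$ of $\lam\mapsto\langle\mathcal R_\lam,\max(\psi,-c)\rangle$, each of which is continuous because the slice measures $\mathcal R_\lam$ depend continuously on $\lam$ for the weak topology. Applying this with $\psi=\varphi_{l,n}$, each $2^{-j}b_{2^j}$ is psh or $\equiv-\infty$, so $\Sigma_l$ is psh or $\equiv-\infty$ on each component of $M$. Finally, on the component of $\lam_0$ it is not $\equiv-\infty$: the finiteness of $L_k(\lam_0)$, together with $L_1(\lam_0)<+\infty$ (the derivative of $f_{\lam_0}$ being bounded on $K_{\lam_0}$) and $L_1(\lam_0)\ge\cdots\ge L_k(\lam_0)$, forces every $L_j(\lam_0)$, hence $\Sigma_l(\lam_0)$, to be finite.

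The main obstacle I expect is making the second step fully rigorous where $\psi=\varphi_{l,n}$ takes the value $-\infty$ (along the grand orbit of $C_f$): one must check that $\varphi_{l,n}$ is locally integrable with respect to $\mathcal R$ — so that $\psi\mathcal R$ and its pushforward are legitimate and the identity $dd^c(\psi\mathcal R)=dd^c\psi\wedge\mathcal R$ holds — and that the truncation limit $\langle\mathcal R_\lam,\max(\varphi_{l,n},-c)\rangle\downarrow\langle\mathcal R_\lam,\varphi_{l,n}\rangle$ is controlled locally uniformly in $\lam$. This is precisely where the hypothesis that $L_k(\lam_0)$ is finite is used: it makes $b_n$ finite at $\lam_0$ and rules out the degenerate situation in which the whole construction collapses. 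Everything else — Oseledec/Kingman, holomorphy of iterates and of exterior powers, and the pushforward calculus for horizontal positive closed currents — is standard.
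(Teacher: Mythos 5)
Your proof is correct and follows essentially the same route as the paper's: represent $\Sigma_l(\lam)$ via Oseledec and Kingman as the decreasing limit of the subadditive sequence $n^{-1}\langle\nu_\lam,\log\|\wedge^l D_z f_\lam^n\|\rangle$, get plurisubharmonicity of each term from the push-forward of a psh function against the slices of $\mathcal R$ (which the paper delegates to Pham's Proposition A.1, and you re-derive), and use the finiteness of $L_k(\lam_0)$ to rule out $\equiv-\infty$. Your passage to the dyadic subsequence $2^{-j}b_{2^j}$ is a slightly more careful realization of the monotone limit than the paper's loose assertion that $n^{-1}\Psi_n$ itself decreases, but it is the same argument.
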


Recall that a current
$\mathcal R$ in $M\times V$
is \emph{horizontal} if
$\pi_V (\Supp \mathcal R) \Subset V$
\cite{Duj04,DS06}.
For a horizontal positive closed current of bidegree
$(k,k)$, the \emph{slice} $\mathcal R_\lam$
(which, for smooth $\mathcal R$, coincides with
 the intersection
$\mathcal R \wedge \pi_M^* \delta_\lam
=\mathcal R \wedge [\pi_M^{-1} (\{\lam\})]$)
is well-defined for all $\lam\in M$ and
can be seen as a positive measure on $V$, whose
mass does not depend on $\lam$, see \cite{DS06}.

\begin{proof}[{Proof of Proposition \ref{p:pham}}]
The proof is essentially
the same as for the case $\nu_\lam= \mu_\lam$
(see \cite{Pha05, DS10})
 hence we only sketch it.

The differential $D_z f_\lam$
depends holomorphically
on $(\lam,z)$. For every $1 \leq \ell \leq k$,
it induces
the linear map
\[\bigwedge^\ell D_z f_\lam
\quad \colon\quad
\bigwedge^\ell T_z \C^k
\to \bigwedge^\ell T_{f_\lam (z)}\C^k
\]
which is defined as
\[
\bigwedge^\ell D_z f_\lam (e_1 \wedge \ldots \wedge e_\ell) := 
D_z f_\lambda (e_1) \wedge \ldots \wedge D_z f_\lambda (e_\ell).
\]
This map still
depends holomorphically on
$(\lam,z)$.
Hence, the map
$(\lam,z)\mapsto \log \|\wedge^\ell D_z f_\lam\|$ is psh.
For every $n\geq 0,$
set $\Psi_n (\lam) :=\langle \nu_\lam,  \log \|\wedge^\ell D_z f^n_\lam\|\rangle$.
By \cite[Proposition A.1]{Pha05}
 (see also \cite[Appendix A.1]{B16}), $\Psi_n$ is psh
or equal to $-\infty$ on $M$.
For all $n,m\geq 0$, $z \in U$, and $\lam \in M$
we also have
\[
\|\wedge^\ell D_z f^{n+m}\|
\leq \|
\wedge^\ell D_{f^n_\lam (z)}  f^m
\|
\, \cdot\,
\|
\wedge^\ell D_{z} f^n
\|,
\]
which implies that
$\Psi_{n+m}(\lam)\leq \Psi_{n} (\lam)+ \Psi_m (\lam)$.
Hence, the sequence $n^{-1} \Psi_n$ decreases to
$\Psi := \inf n^{-1} \Psi_n$. By Oseledets theorem, we have
$\Psi(\lam) = \Sigma_\ell (\lam)$.
We deduce that
$\Sigma_\ell$
is psh or identically $-\infty$. The latter possibility
is excluded since
the assumption on $\lam_0$
implies that $\Sigma_\ell (\lam_0) >-\infty$.
The assertion follows.
\end{proof}

\subsection{Stability notions}

We fix in this section a connected and
simply connected
complex manifold
$M$ and a holomorphic family of
polynomial-like maps
of large topological degree $f\colon \mathcal U\to \mathcal V  = M\times V$.
We first consider
the space of maps
$$\mathcal{O}(M,\mathbb{C}^k)
:=\{\gamma:M\to \mathbb{C}^k: \gamma\ \mathrm{holomorphic}\}$$
with the topology of local uniform convergence.
 $\mathcal{O}(M,\mathbb{C}^k)$
is a metric space.
We then define
the 
subspace
\begin{equation}\label{e:J}
\mathcal{J} 
:=\{\gamma\in \mathcal{O}(M,\mathbb{C}^k): \gamma(\lambda)\in J_\lambda \, \forall \lambda\in M\}
\end{equation}
and the two
 natural maps:
\begin{enumerate}
    \item $\mathcal{F}:\mathcal{J}\to\mathcal{J}$,
    which is defined by 
    $\mathcal{F}(\gamma)(\lambda)=f_\lambda(\gamma(\lambda));$
    \item $p_\lambda: \mathcal{J}\to J_\lambda$,
    which is defined by $p_\lambda(\gamma)=\gamma(\lambda)$.
\end{enumerate}
Observe that
$\mathcal{J}$
is a compact metric space, and
that
$(\mathcal J, \mathcal F)$ is a well-defined
dynamical system.

\begin{definition}
 A \emph{web} for the family $f$
is an $\mathcal F$-invariant probability measure on $\mathcal J$.

Given $\lambda_0\in M$ and
an $f_{\lambda_0}$-invariant 
 probability measure $\nu$
supported on $J_{\lambda_0}$, a
\emph{$(\lambda_0, \nu)$-web}
(or \emph{$\nu$-web} for brevity)
is a web $\mathcal M$ such that
  $(p_{\lambda_0})_*\mathcal{M}=\nu$.

If a web $\mathcal M$ satisfies
$(p_{\lambda})_*\mathcal{M}=\mu_\lambda$
(the equilibrium measure for $f_{\lambda}$)
for all $\lambda \in M$, 
$\mathcal{M}$  is an \emph{equilibrium web} for the family $f$.
\end{definition}

\begin{definition}
Given $\lambda_0\in M$ and
an $f_{\lambda_0}$-invariant 
 probability measure $\nu$
supported on $J_{\lambda_0}$,
a dynamical lamination $\mathcal{L}$
(see Definition \ref{def:lamination})
is said to be a \emph{$\nu$-lamination} if $\nu (\{\gamma(\lambda_0):\gamma\in\mathcal{L}\})=1$.
\end{definition}

 The \emph{repelling $J$-cycles} of $f_\lambda$ are
 the repelling cycles of $f_\lambda$ which belong to $J_\lambda$.
 In dimension 
  $k=1$,
  all
 repelling cycles are automatically
 repelling $J$-cycles. When $k\ge2$ there are examples
 of repelling points outside of the Julia set (see
 for instance \cite{FS01,HP94}). Recall, however, that
repelling $J$-cycles
are dense in the Julia set
\cite{BD99,DS03}.

\begin{definition}\label{def:motion-rep-all}
  We say that the repelling $J$-cycles
  of $f_\lambda$ \emph{move holomorphically} over an open subset $\Omega\subseteq M$ if for every $n\ge 1$ there exists a set of holomorphic maps
  $\rho_{j,n}\in \mathcal J$
  such that 
  $\mathcal{R}_n(\lambda)=\{\rho_{j,n}(\lambda):1\le j\le N_d(n)\}$
  for all $\lambda\in \Omega$, where $\mathcal{R}_n(\lambda):=\{
  \mbox{repelling } n-\mbox{periodic  points of }f_\lambda \mbox{ in }J_\lambda\}$ and $N_d(n)=\mathrm{Card} (\mathcal{R}_n(\lambda))$
 for all $\lam \in M$.
  \end{definition}

  The following, a priori weaker, condition
  on the repelling cycles was introduced in \cite{B19}.

  \begin{definition}\label{def:motion-rep-asympt}
  We say that \emph{asymptotically all} repelling cycles move holomorphically  on $\Omega$
  if
  there exists a set
  $\mathcal P= \cup \mathcal P_n \subset \mathcal J$
  with the following properties:
 \begin{enumerate}
 \item  $\mathrm{Card }(\mathcal P_n) = d_t^n + o(d_t^n)$;
 \item every $\gamma\in \mathcal P_n$ is $n$-periodic;
 \item  for all open $\Omega'\Subset \Omega$, we have
\[ \frac{\mathrm{Card } \{
 \gamma 
 \in \mathcal P_n \colon\gamma 
 (\lam)\mbox{ is repelling for all } \lambda \in \Omega'\} }{d_t^n}\to 1.\]
\end{enumerate}
\end{definition}

\begin{definition}
  We say that
  $\lambda_0\in M$ is a \emph{Misiurewicz} parameter if there exist
  integers $p_0,n_0\ge 1$ and a holomorphic map $\sigma$ defined
  on some neighbourhood of $\lambda_0$ such that
  $\sigma(\lambda)\in \mathcal{R}_{p_0}(\lambda)$
  and $\Gamma_\sigma\cap W\ne \emptyset$
  but $\Gamma_\sigma\not\subseteq W$ for some
  irreducible component $W$ of $f^{n_0}(C_f)$,
  where $\Gamma_\sigma$ denotes the graph of $\sigma$.
\end{definition}

The following result is a generalization
of Theorem-Definition \ref{t:BBD-intro}
to the setting of polynomial-like maps of
large topological degree. Observe in particular that
it applies to
any subfamily of $\mathcal H_d (\P^k)$,
for any $k\geq 1$ and $d\geq 2$. In particular,
it permits to extend the
definition of stability to these settings.

\begin{theorem}[{\cite[Theorem C]{B19}}]\label{equivth}
Let $M$ be a connected and simply connected complex manifold
and let $(f_\lambda)_{\lambda \in M}$ be a holomorphic family of polynomial-like maps of large topological degree.
Then the following assertions are equivalent:
\begin{enumerate}
    \item[\bf (S1)] asymptotically all repelling $J$-cycles of $f_\lambda$ move holomorphically over $M$;
    \item[\bf (S2)] $dd^c L_{\mu}(\lambda)\equiv0$, where $L_{\mu}(\lambda):=\int_{U_\lambda}\log\left|\jac f_\lambda(z)\right|\mu_\lambda(z)$ is the sum of the Lyapunov exponents of the unique measure of maximal entropy $\mu_\lambda$  of $f_\lambda$;
    \item[\bf (S3)] there are no Misiurewicz parameters;
    \item[\bf (S4)] the family admits an equilibrium lamination.
\end{enumerate}
\end{theorem}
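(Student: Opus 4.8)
The plan is to establish the four conditions as equivalent by proving three links, all organised around the observation that $L_\mu$ is \emph{always} plurisubharmonic on $M$. Indeed, applying Proposition \ref{p:pham} to the horizontal equilibrium current of the family (the positive closed $(k,k)$-current whose slices are the equilibrium measures $\mu_\lambda$, see \cite{Pha05,DS10}) with $l=k$, and using that the Lyapunov exponents of $\mu_\lambda$ are finite, shows that $\Sigma_k=L_\mu$ is psh. Hence the content of \textbf{(S2)} is exactly that $L_\mu$ is pluriharmonic, i.e.\ that $\lambda_0\notin\Supp(dd^c L_\mu)$ for every $\lambda_0\in M$. I would then prove \textbf{(S2)}$\Leftrightarrow$\textbf{(S4)}, \textbf{(S2)}$\Leftrightarrow$\textbf{(S1)}, and \textbf{(S2)}$\Leftrightarrow$\textbf{(S3)}.

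For \textbf{(S4)}$\Rightarrow$\textbf{(S2)}, the equidistribution of the repelling $J$-cycles towards $\mu_\lambda$ together with the domination of the dynamical degrees gives $L_\mu(\lambda)=\lim_n\frac{1}{n\,d_t^n}\sum_{p\in\mathcal R_n(\lambda)}\log\abs{\jac f_\lambda^n(p)}$, locally uniformly on $M$. Given an equilibrium lamination $\mathcal L$, its $n$-periodic graphs avoid $GO(C_f)$, so each $\lambda\mapsto\log\abs{\jac f_\lambda^n(\gamma(\lambda))}$ is pluriharmonic (a non-vanishing holomorphic Jacobian along the graph); a counting argument using the $d_t$-to-$1$ property and property (4) of Definition \ref{def:lamination} shows these graphs carry $d_t^n(1+o(1))$ of the cycles in $\mathcal R_n(\lambda)$, the remaining ones contributing $o(1)$ since $\frac1n\log\abs{\jac f_\lambda^n}$ is uniformly bounded on $K_\lambda$; hence $L_\mu$ is a locally uniform limit of pluriharmonic functions, so pluriharmonic. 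The reverse \textbf{(S2)}$\Rightarrow$\textbf{(S4)} is the core of the theorem. The equilibrium web $\mathcal M$ of the family exists unconditionally \cite{Pha05}; the point is that \textbf{(S2)} forces $\mathcal M$ to be carried by a lamination. Disjointness of the graphs in $\Supp\mathcal M$ comes from an intersection-theoretic argument: a non-trivial crossing of two of them would create extra self-intersection of the current associated to $\mathcal M$ and hence force $dd^c L_\mu\neq 0$. Avoidance of $GO(C_f)$ is a Borel--Cantelli estimate: pluriharmonicity of $L_\mu$ makes the series $\sum_n\mu_\lambda(A_n)$ converge, where $A_n$ is the set of those $z\in J_\lambda$ whose graph in $\Supp\mathcal M$ is sent by $\mathcal F^n$ within $e^{-\epsilon n}$ of $C_f$; so $\mathcal M$-a.e.\ graph avoids $C_f$ and, by invariance, all of $GO(C_f)$. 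The $d_t$-to-$1$ property is inherited from the topological degree of $f$.

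The link \textbf{(S2)}$\Leftrightarrow$\textbf{(S1)} is then short: \textbf{(S4)}$\Rightarrow$\textbf{(S1)} is the counting argument above, the periodic graphs of $\mathcal L$ furnishing the sets $\mathcal P_n$ and being repelling on each $\Omega'\Subset M$ because along $\mathcal L$ the Lyapunov exponents dominate those of $\mu$, which are strictly positive; and \textbf{(S1)}$\Rightarrow$\textbf{(S2)} runs the same approximation, the functions $\frac{1}{n\,d_t^n}\sum_{\gamma\in\mathcal P_n}\log\abs{\jac f_\lambda^n(\gamma(\lambda))}$ being pluriharmonic along the holomorphic motions $\rho_{j,n}$ and converging locally uniformly to $L_\mu$. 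For \textbf{(S2)}$\Leftrightarrow$\textbf{(S3)}: if $\lambda_0$ is a Misiurewicz parameter, the repelling periodic graph $\sigma$ meets an irreducible component $W$ of $f^{n_0}(C_f)$ properly, and a transversality argument as in \cite{BBD18} --- pulling $[C_f]$ back along the iterates and exploiting the hyperbolicity of $\sigma$ --- shows that a local potential of $dd^c L_\mu$ is not pluriharmonic near $\lambda_0$, so \textbf{(S2)} fails. Conversely, if there are no Misiurewicz parameters the germs of the multivalued continuations of repelling $J$-cycles never collide with any component of any $f^n(C_f)$, and a compactness argument replacing Montel's theorem, together with the hyperbolic estimates from the positivity of the exponents of $\mu$, shows these continuations are equicontinuous, hence (as $M$ is simply connected) single-valued holomorphic elements of $\mathcal J$; an equidistribution argument then upgrades ``each continuable cycle moves'' to ``asymptotically all cycles move'', giving \textbf{(S1)}.

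The hard part will be \textbf{(S2)}$\Rightarrow$\textbf{(S4)}, and within it the two geometric statements making the equilibrium web into a genuine lamination --- disjointness of graphs via current intersection theory, and avoidance of $GO(C_f)$ via the Borel--Cantelli estimate --- because both require turning the soft hypothesis (pluriharmonicity of the single function $L_\mu$) into quantitative control of \emph{individual} orbits of $\mathcal F$. Here the absence of a global Green function for polynomial-like maps is exactly what makes the argument harder than in \cite{BBD18}: the cohomological computations available on $\Pk$ must be replaced by mass estimates for horizontal currents in the spirit of Lemma \ref{l:thesislemmaA} and Proposition \ref{p:pham}. The normality step in \textbf{(S3)}$\Rightarrow$\textbf{(S1)}, where several-variable function theory offers only a weak substitute for Montel's theorem, is a secondary difficulty.
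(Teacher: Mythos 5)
A contextual note first: the paper does not prove Theorem \ref{equivth} --- it imports it verbatim as \cite[Theorem C]{B19}, the polynomial-like analogue of \cite{BBD18} --- so there is no in-text proof to compare against. That said, your attempt has a decisive gap at the load-bearing step. You assert that ``the equilibrium web $\mathcal M$ of the family exists unconditionally \cite{Pha05}.'' This is not correct, and it is precisely the point where the theorem has its content. What Pham constructs unconditionally is a horizontal positive closed $(k,k)$-current $\mathcal R$ on $M\times V$ whose slices are $\mu_\lambda$ (this is the object fed into Proposition \ref{p:pham}). An equilibrium web is a much stronger structure: a probability measure on the space $\mathcal J$ of \emph{global} holomorphic graphs $\gamma\colon M\to\C^k$ with $\gamma(\lambda)\in J_\lambda$, invariant under $\mathcal F$ and with $(p_\lambda)_*\mathcal M=\mu_\lambda$ for all $\lambda$. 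That amounts to a decomposition of $\mathcal R$ into genuine entire graphs over $M$, which is exactly what stability asserts; indeed the paper records ``there exists an acritical $\mu$-web'' as condition \textbf{(S5)}, on an equal footing with \textbf{(S1)}--\textbf{(S4)}, not as a given. In \cite{BBD18,B19} the web is manufactured either from \textbf{(S1)}, by taking weak limits of equidistributed masses on the holomorphically moving cycles --- precisely the construction in Lemma \ref{l:existanceweb} of this paper, specialized to $\nu=\mu_{\lambda_0}$ --- or from \textbf{(S2)} via a nontrivial compactness argument. Neither step is free, and without one of them your chain \textbf{(S2)}$\Rightarrow$\textbf{(S4)} has no starting point.

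Second, your mechanism for disjointness of graphs in $\Supp\mathcal M$ --- ``a non-trivial crossing of two of them would create extra self-intersection of the current associated to $\mathcal M$ and hence force $dd^c L_\mu\neq0$'' --- is not how the argument goes and is hard to make precise: a transverse crossing of two \emph{individual} graphs is not in direct conflict with the pluriharmonicity of the single scalar $L_\mu$, nor is a useful self-intersection $\mathcal R\wedge\mathcal R$ available for a horizontal $(k,k)$-current in $M\times V$. The actual disjointness statement is the ``Fact'' quoted in the proof of Proposition \ref{t:lamination} (from \cite[Section 4.3]{BB18}): it is a hyperbolicity estimate driven by the strictly positive, locally uniform lower bound $\tfrac12\log(d_t/d^*_{k-1})$ on the Lyapunov exponents of $\mu_\lambda$ \cite{DS03}, which forces $\mathcal M$-typical graphs to repel one another; $dd^c L_\mu\equiv0$ enters only to propagate this positivity across the parameter space via Proposition \ref{p:pham}-type plurisubharmonicity, not to exclude crossings directly. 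Your Borel--Cantelli step for avoiding $GO(C_f)$ has a similar misattribution: what makes $\sum_n\mu_\lambda(A_n)$ converge is the moderation of $\mu_\lambda$ (polynomial decay of the mass of $\varepsilon$-tubes around analytic sets), an unconditional property of the equilibrium measure for maps of large topological degree, not the pluriharmonicity of $L_\mu$. The outer scaffolding you propose --- approximating $L_\mu$ by pluriharmonic averages along moving periodic graphs for \textbf{(S4)}$\Rightarrow$\textbf{(S1)}$\Rightarrow$\textbf{(S2)}, and detecting Misiurewicz parameters through a failed transversality with the postcritical set for \textbf{(S2)}$\Leftrightarrow$\textbf{(S3)} --- does match the shape of the argument in \cite{BBD18,B19}, but the two points above are where the proposal currently breaks.
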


Condition {\textbf {(S3)}}
is already present in \cite{BBD18} (and is in particular
equivalent to the conditions in Theorem-Definition \ref{t:BBD-intro}),
see also \cite{Lev82,Ly83a} for the analogous
equivalence in dimension 1 and \cite{BB22} for further characterization of stability.

 \begin{remark}
  By \cite{B18},
  condition {\textbf {(S1)}}
  can actually be weakened 
 to the following, a priori weaker, condition:
  there exists a function $N\colon \mathbb N \to \mathbb N$
  with $\limsup_{n\to \infty} d^{-n}_t N(n)>0$ such that,
  for every $n$, $N (n)$ repelling periodic
  points move holomorphically
  (as repelling periodic points). 
\end{remark}

A crucial role in the proof of both Theorems
\ref{t:BBD-intro} and \ref{equivth}
is the concept of \emph{acritical web}. While the
definition in \cite{BBD18} and \cite{B19}
is given only for equilibrium webs,
we can give it for general webs.

\begin{definition}\label{d:acritical}
  A web $\mathcal{M}$ is said to be \emph{acritical} if
  $\mathcal M (\mathcal J_s)=0$, where
  $$\mathcal J_s := \{\gamma\in\mathcal{J}:\Gamma_\gamma\cap GO(C_f)
  \ne \emptyset\} 
  \quad  \mbox{ and } 
  \quad GO ( C_f) := \cup_{n,m\geq 0} f^{-m} ( f^{n} (C_f)).$$
 \end{definition}

In particular (see for instance
\cite[Theorem 4.11]{B19}),
the conditions
{\bf (S1)}-{\bf (S4)}
in Theorem \ref{equivth} are equivalent to the following one:
\begin{itemize}
    \item[\bf (S5)] there exists an acritical equilibrium web.
   \end{itemize}

\smallskip

The following is a more general version of
Theorem 
\ref{t:main-intro} that was announced in the Introduction.
 For any $\lam\in M$, 
we denote by $C_{f_\lam}^+:=\cup_{m\ge0} f_\lam^m(C_{f_\lam})$ the postcritical set of $f$.

\begin{theorem}\label{t:mainequiv}
Let $M$ be a connected and simply connected complex manifold
and let $(f_\lambda)_{\lambda \in M}$ be a holomorphic family of polynomial-like maps of large topological degree.
 Fix $\lam_0\in M$ and consider an ergodic
$f_{\lambda_0}$-invariant
 probability 
measure $\nu_0$ such that $\Supp\nu_0\subseteq J_{\lambda_0}$, the smallest Lyapunov exponent of $\nu_0$ is strictly
positive,  and $\nu (C^+_{f_{\lambda_0}})=0$. Then, up to replacing $M$ with a sufficiently small open
neighbourhood $M_{\lambda_0, \nu_0}$
of $\lambda_0$,
the conditions {\bf (S1)}-{\bf (S4)} in
Theorem \ref{equivth}
and the condition {\bf (S5)} above
are 
equivalent to the following assertions:
\begin{itemize}
  \item[\bf (S4')] there exists a $\nu_0$-lamination;
  \item[\bf (S5')] there exists an ergodic acritical $\nu_0$-web
  $\mathcal M$
  such that, for all $\lam \in M_{\lam_0, \nu_0}$,
     the
   Lyapunov exponents 
   of $(p_\lam)_* \mathcal M$ 
   are uniformly bounded from below by a strictly positive constant.
\end{itemize}
\end{theorem}

 The main reason why we may need to reduce $M$ to $M_{\lambda_0, \nu_0}$
 (possibly depending on $\nu_0$)
is due to the fact that the smallest Lyapunov exponent of the motion of $\nu_0$ may, a priori, become negative at some $\lambda_1\in M$. In our construction (and more precisely
in the construction of the $\nu_0$-lamination)
we will need to restrict to the
parameters where such an exponent stays positive.

\smallskip

It is proved in \cite{BBD18}
that the existence
of a graph $\gamma\colon M\to \P^k$
whose orbit does not intersect the postcritical set
implies the stability of the family, and is then
equivalent to it. The same result is proved in
\cite{B16} for families of polynomial-like maps.
In particular, it follows directly
that the existence of an acritical web 
(associated to any measure, hence in particular {\bf (S4')})
implies that
the family is stable. 
The implication
{\bf (S5')} $\Rightarrow$ {\bf (S4')}
 follows from similar arguments as in \cite{BBD18,B19,BB18}, that we will briefly recall for convenience and later reference,
 see Section \ref{s:new-lamination} and the Appendix.
The main point of Theorem \ref{t:mainequiv}
is the implication
{\bf (S1)} $\Rightarrow$ {\bf (S5')}, and in particular the positivity of the Lyapunov exponents in {\bf (S5')}.

\medskip

By Theorem \ref{t:lyapunov-positive}, 
Theorem \ref{t:mainequiv} in particular applies when the measure-theoretic entropy of the measure  $\nu_0$ is strictly larger than
$\log d^{*}_{k-1}$. In this case
we can also find a uniformity for the neighbourhood
$M_{\lam_0,\nu_0}$, depending only on the measure-theoretic entropy of $\nu_0$.
As we will see,
this fact is also a consequence of the uniform bound on the Lyapunov exponents 
given by Theorem \ref{t:lyapunov-positive}.
The existence of $M_{\lam_0,h}$ as in the statement below follows from the upper semicontinuity of 
 the function $\lambda \mapsto d^*_{k-1} (f_\lam)$.

\begin{theorem}\label{t:mainlamin}
 Let $M$ be a connected and simply connected
 complex manifold
and let $(f_\lambda)_{\lambda \in M}$ be a 
 stable
family of polynomial-like maps of
 large
topological
degree. Fix $\lambda_0\in M$ and
 $h \in \mathbb R$ such that $\log d^*_{k-1} (f_{\lam_0})<h < \log d_t$ and let
 $M_{\lam_0,h}$ be a simply connected open neighbourhood of $\lam_0$ such that $\log d^*_{k-1} (f_{\lam})<h $ for any $\lam\in M_{\lam_0,h}$.
Then for
 any
 ergodic
$f_{\lambda_0}$-invariant probability
measure  $\nu_0$
supported in $J_{\lambda_0}$
such that $h_{\nu_0}(f_{\lam_0})>h$,
 the properties
{\bf (S4')} and {\bf (S5')}
hold on $M_{\lambda_0,h}$.

Moreover, 
there exists a dynamical
lamination
$\mathcal L$
satisfying
\begin{itemize}
    \item[\bf (Sh)]
    $\nu (\{\gamma(\lambda):\gamma\in\mathcal{L}\})=1$
for
every
$\lam \in M_{\lambda_0, h}$ and
every
$f_\lam$-invariant measure
$\nu$ 
such that $h_\nu (f_\lam)> h$.
\end{itemize}
\end{theorem}

The following is a version of the above result for families of endomorphisms of $\P^k$, stating that, in this case, 
the neighbourhood $M_{\lam_0, h}$ in Theorem \ref{t:mainlamin} can be taken equal to $M$
(recall that
$d^*_{k-1}(f_\lam)=d^{k-1}$ for all $\lam\in M$ in this case). 
Observe that Theorem \ref{t:main-intro}
corresponds to the case 
of Corollary \ref{c:pk} 
where $M$ is an open subset
of $\mathcal H_d (\P^k)$.

\begin{corollary}\label{c:pk}
 Let $M$ be a connected and simply connected
 complex manifold
and let $(f_\lambda)_{\lambda \in M}$ be a
stable
family of endomorphisms of $\mathbb P^k$ of algebraic degree $d\geq 2$.
Fix $\lam_0\in M$ and let $\nu_0$ be any ergodic 
$f_{\lam_0}$-invariant probability measure 
with $h_{\nu_0} (f_{\lam_0})> (k-1)\log d$. Then, the properties 
{\bf (S4')} and {\bf (S5')}
hold on $M$. In particular,
there exists a dynamical
lamination
$\mathcal L$
satisfying 
\begin{itemize}
    \item[\bf (S*)]
    $\nu (\{\gamma(\lambda):\gamma\in\mathcal{L}\})=1$
for every $\lam \in M$ and
every $f_\lam$-invariant measure
$\nu$ 
 such that $h_\nu (f_\lam)> (k-1)\log d$.
\end{itemize}
\end{corollary}

\section{Proof of main results}
\label{s:proof-thm}

In this section we give the proof of
Theorems \ref{t:mainequiv} and \ref{t:mainlamin} and Corollary \ref{c:pk}. In particular, this also proves Theorem \ref{t:main-intro}.
We fix a connected and simply connected complex manifold $M$ and let $(f_\lam)_{\lam \in M}$ be a stable family of polynomial-like maps of large topological degree.
In Section \ref{s:statoac} we prove the existence of
a special acritical $\nu$-web, see Proposition \ref{p:nuweb}.
In Section \ref{sec:Lyapunov} we define a Lyapunov
function for a given $\nu$-web and 
give a criterion for the
pluriharmonicity of such
function, that in particular applies to the web constructed in Proposition \ref{p:nuweb}. 
As a consequence, 
we deduce the positivity of the Lyapunov exponents of the slices of that web
in a neighbourhood of the starting parameter, proving 
{\bf (S5')}.
 In Section \ref{s:new-lamination},
we prove the existence of
a $\nu$-lamination and conclude the proofs of
the main results.

\subsection{From stability to acritical $\nu$-webs}
\label{s:statoac}
In this section we 
 give the first part of the proof of the implication
{\bf (S1)} $\Rightarrow ${\bf (S5')} in Theorem \ref{t:mainequiv}, namely we prove the existence of a
suitable acritical web (with no requirement on the positivity of the associated Lyapunov exponents) 
under the stability assumption.
 Observe that
 we do not need to restrict the parameter space to get such property. For simplicity, we will set the following definition.
 Recall that we fix a stable family $(f_\lam)_{\lam \in M}$ in all this section.

\begin{definition}\label{def:critical-inside}
Given $n \in \mathbb N$, a measure $\mathcal M$ on $\mathcal J$ is \emph{critically $n$-aligned} if
\[
\forall 
\gamma\in \mathrm{Supp}
\mathcal{M} 
\colon \quad 
 \Gamma_\gamma\cap 
 f^{n}(C_f)\ne \emptyset
\Rightarrow \Gamma_\gamma\subseteq f^n(C_f).
\]
$\mathcal M$ is \emph{critically aligned} if it is critically $0$-aligned. It is
\emph{postcritically aligned} if it is critically $n$-aligned for all $n\geq 0$.
\end{definition}

\begin{lemma}\label{l:pc-acritical}
Take $\lam_0\in M$ and let
$\nu$ be an $f_{\lam_0}$-invariant measure such that $\nu (C^+_{f_{\lam_0}})=0$.
Let 
 $\mathcal M$ be a postcritically aligned
 $\nu$-web. Then $\mathcal M$ is acritical.
\end{lemma}

\begin{proof}
By the $\mathcal F_*$-invariance of $\mathcal M$,
it is enough to
prove that $\mathcal M (\mathcal J^+_s)=0$, where
$\mathcal{J}_s^+
:=\left\{\gamma\in\mathcal{J}\colon \Gamma_\gamma\cap C_f^+\ne\emptyset\right\}$.
Since $\mathcal M$ is postcritically aligned, by Definition \ref{def:critical-inside}
we have
$$\mathcal{M}\left(\left\{\gamma\in\mathcal{J}
\colon \Gamma_\gamma\cap C_f^+\ne\emptyset\right\}\right)
\le
\mathcal{M}\left(\left\{\gamma\in\mathcal{J}\colon
\Gamma_\gamma\subseteq
C_f^+\right\}\right)$$
$$=\mathcal{M}\left(\left\{\gamma\in\mathcal{J}\colon \gamma(\lambda_0)\in C_{f_{\lambda_0}}^+\right\}\right)
=\nu \left(C_{f_{\lambda_0}}^+\right)=0,$$
where the last equality
follows from the assumption on $\nu$.
The assertion follows.
\end{proof}

The main result of this section is the following proposition.

\begin{proposition}\label{p:nuweb}
Fix $\lambda_0\in M$ and
let $\nu$ be an ergodic
$f_{\lambda_0}$-invariant probability
measure supported in $J_{\lambda_0}$ and
such that $\nu(C^+_{f_{\lam_0}})=0$.
There exists a 
postcritically aligned ergodic
$\nu$-web $\mathcal M$.
\end{proposition}

Observe that, in particular, 
$\mathcal M$ as in Proposition \ref{p:nuweb}
is acritical by Lemma \ref{l:pc-acritical}, and for every $\lam \in M$
the measure $\nu_\lambda:=(p_{\lambda})_*\mathcal{M}$
is an ergodic $f_\lam$-invariant
probability measure.

We will need
 the following technical lemma.
\begin{lemma}\label{limitlemma}
Let $X$ be a metric space and  $\nu$ be
a compactly supported probability measure on $X$.
Let $X_j:=\{x_1^j,x_2^j,\ldots,x^j_{l(j)}\}$
be a sequence of finite sets
such that
\begin{enumerate}
\item the  cardinalities $l(j)$ of $X_j$
satisfy
$l(j+1)\ge l(j)$;
\item $L:=\overline{\cup_j X_{j}}$ is a
compact subset of $X$ and $\Supp \nu \subseteq L$;
\item\label{(3)}
 $\overline{\cup_n X_{j_n}}=L$ for any
 subsequence $\{X_{j_n}\}$.
\end{enumerate}
Then there exists a sequence of sets
$A_j:=\{a^j_1,a^j_2,\ldots,a^j_{l(j)}\}$
of non-negative real numbers
with
$\sum_{m=1}^{l(j)} a^j_m=1$ such that
$$\nu_j:=\sum_{m=1}^{l(j)} a^j_{m}\delta_{x^j_m}\to \nu,$$
where $\delta_{x}$ is the Dirac mass at $x\in X$.
\end{lemma}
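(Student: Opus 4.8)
The plan is to build the weights $a^j_m$ so that the discrete measures $\nu_j$ approximate $\nu$ on a countable determining family of test functions, then pass to a subsequence. First I would fix a countable dense set $\{\varphi_i\}_{i\geq 1}$ in the separable Banach space $C^0(L)$ (equipped with the sup norm; recall $L$ is compact), so that weak-$*$ convergence of probability measures on $L$ is metrized by $\rho(m,m') := \sum_i 2^{-i}\frac{|\langle m,\varphi_i\rangle - \langle m',\varphi_i\rangle|}{1+\|\varphi_i\|}$. It then suffices to produce, for each $j$, a probability vector $A_j$ supported on $X_j$ with $\rho(\nu_j,\nu)\to 0$. For this I would cover $L$ by finitely many balls of radius $\epsilon_j\to 0$, use a partition of unity subordinate to this cover to write $\nu = \sum_\ell \nu(B_\ell)\,\nu|_{B_\ell}$ (suitably normalized), and in each ball $B_\ell$ select one point of $X_j$ lying in (a slight enlargement of) $B_\ell$ — such a point exists once $j$ is large, by hypothesis \eqref{(3)}, which is precisely what guarantees that \emph{every} tail of the sequence $(X_j)$ is still dense in $L$. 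Assigning to that chosen point of $X_j$ the mass $\nu(B_\ell)$ (and mass $0$ to the unused points of $X_j$, which is allowed since the $a^j_m$ need only be non-negative) yields a probability measure on $X_j$ that is $\epsilon_j$-close to $\nu$ for every Lipschitz test function, hence $\rho(\nu_j,\nu)\to 0$.

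The key steps, in order, are: (i) reduce weak-$*$ convergence to convergence against a countable dense family of continuous functions on the compact set $L$; (ii) for a given mesh $\epsilon_j$, produce a finite measurable partition $\{P_\ell\}$ of $L$ into pieces of diameter $<\epsilon_j$ and, using hypothesis \eqref{(3)}, pick for each $\ell$ with $\nu(P_\ell)>0$ a point $y_\ell^j \in X_j$ with $\mathrm{dist}(y_\ell^j, P_\ell)<\epsilon_j$ — here one uses that $X_j$, for $j$ large enough depending on $\epsilon_j$, is $\epsilon_j$-dense in $L \supseteq \Supp\nu$; (iii) define $A_j$ by putting mass $\nu(P_\ell)$ at $y_\ell^j$ (summing the masses if the same point is chosen for several $\ell$) and mass $0$ elsewhere, giving a probability vector on $X_j$; (iv) estimate $|\langle \nu_j,\varphi\rangle - \langle \nu,\varphi\rangle| \leq \omega_\varphi(\epsilon_j + \mathrm{diam\ correction})$ where $\omega_\varphi$ is the modulus of continuity of $\varphi$, hence $\to 0$ for each fixed $\varphi$; (v) conclude $\nu_j \to \nu$ weakly-$*$, i.e. in $\mathcal{O}(M,\mathbb{C}^k)$-irrelevant topology — just the usual weak topology on measures on $X$.

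The main subtlety — and the reason hypothesis \eqref{(3)} is stated the way it is — is step (ii): it is \emph{not} enough that $\overline{\cup_j X_j} = L$, because for a fixed small $\epsilon_j$ one needs the \emph{particular} finite set $X_j$ to already be $\epsilon_j$-dense, and a priori the points needed to fill a given ball might only appear at much later indices. Hypothesis \eqref{(3)} (every subsequence still has union dense in $L$) rules out the bad scenario where all the "new" points accumulate on a proper subset; combined with the monotonicity $l(j+1)\ge l(j)$ it lets one argue that for each $\delta>0$ there is $J(\delta)$ with $X_j$ being $\delta$-dense in $L$ for all $j\ge J(\delta)$ — indeed, if not, some subsequence $X_{j_n}$ would each omit a $\delta$-ball, and by compactness of $L$ these balls could be taken around a convergent sequence of centers, contradicting $\overline{\cup_n X_{j_n}} = L$. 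Once this uniform-density-along-the-tail statement is in hand, one simply takes $\epsilon_j \to 0$ slowly enough that $X_j$ is $\epsilon_j$-dense, and the construction above goes through. The rest is routine: normalizing the probability vectors, handling pieces $P_\ell$ with $\nu(P_\ell)=0$ by assigning them zero mass, and the standard modulus-of-continuity estimate.
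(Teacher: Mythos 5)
Your proposal is correct and takes essentially the same approach as the paper: both proofs partition $L$ into pieces of small diameter, assign to a nearby point of $X_j$ the $\nu$-mass of the corresponding piece, and rely on hypothesis (3) to guarantee that $X_j$ is eventually $\epsilon$-dense (the same density argument by contradiction using compactness of $L$). The only cosmetic difference is in the final convergence step, where the paper extracts a weak-$*$ subsequential limit $\tilde\nu$ by Banach--Alaoglu and shows $\nu\le\tilde\nu$ on closed balls (concluding by equality of total masses), whereas you verify convergence directly through a modulus-of-continuity estimate; both are routine and equivalent.
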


\begin{proof}
For every $j$, let
 $\zeta_j:=\{B^j_1,B^j_2,\ldots,B^j_{l(j)}\}$
be a measurable partition of $L$ such that $x^j_m\in B^j_m$
for all $m$.
By the assumption (\ref{(3)}) it follows
the union of the $X_j$'s is dense in $L$. Hence,
we can assume that the maximum diameter of
the elements of the partition $\zeta_j$,
 goes to $0$ as $j\to \infty$,
 i.e., that \begin{equation}\label{eq-diam-partition-tend-0}
\diam \zeta_j := \sup_{1\leq m
\leq l(j)} \diam {B^j_m}\to 0
\mbox{ as } j\to \infty.
\end{equation}
For all $j$ and
$1\le m\le l(j)$,
set $a^j_m:=\nu(B^j_m)$.
By 
construction we have $|\nu_j|=1$ for all $j$. Hence,
by Banach-Alaoglu theorem,
there exists a converging subsequence
$\{\nu_{j_i}\}$ of $\{\nu_{j}\}$.
Fix one such subsequence and denote
$\widetilde \nu := \lim_{i\to \infty} \nu_{j_i}$.
Since $|\nu_{j_i}|=1$
it follows that  $|\widetilde \nu|= 1$.
Hence,
it is enough to prove that
$\nu \leq \widetilde \nu$. In order to do this, it is enough
to show that $\nu (D) \leq \widetilde \nu (D)$ for all
closed balls $D \subseteq X$
centered on $\Supp \nu$.

Let us fix a closed ball $D$ as above and, for all
$\varepsilon>0$, let $D_\varepsilon$
be the closed $\varepsilon$-neighbourhood of $D$, i.e.,
$$D_\varepsilon:=\{x\in X:\mathrm{dist}(x,D)
\le \varepsilon\}.$$
By \eqref{eq-diam-partition-tend-0},
there exists $i_0$ such that
$\diam \zeta_{j_i} < \varepsilon /2$
for all $i>i_0$. It follows that, for $i\geq i_0$,
all elements of $\zeta_{j_i}$ intersecting
$D$ are contained
in $D_\epsilon$. This implies that,
for all $i\geq i_0$, we have
$\nu(D)\le\nu_{j_i}(D_{\varepsilon})$.
Hence, $\nu(D)\le\widetilde \nu (D_{\varepsilon})$.
Finally, since
$\lim\limits_{\varepsilon\to 0}\widetilde{\nu}(D_\varepsilon)
=\widetilde{\nu}(D)$,
we obtain the desired inequality
$\nu(D)\le\widetilde \nu (D)$
by letting $\varepsilon$
tend to $0$. The proof is complete.
\end{proof}

\begin{proof}[Proof of Proposition \ref{p:nuweb}]
Since $(f_\lam)_{\lam\in M}$ is stable, by Theorem \ref{equivth},
for every $n\in \mathbb{N}^*$ there exists a collection
$\{\gamma_{j,n}:1\le j\le N_d(n)\}\subset \mathcal{J}$
such that
$$\mathcal{R}_n(\lambda)
=\{\gamma_{j,n}(\lambda):1\le j\le N_d(n)\}$$
for all $\lambda\in M$, where
$\mathcal R_n (\lambda)$
is a set of repelling $n$-periodic points
in $J_{\lambda}$ and
$N_d(n)\sim d_t^n$.
Let $A_{n}=\{a^n_1,\ldots,a^n_{N_d(n)}\}$ be a sequence of sets
of real numbers given by applying
Lemma \ref{limitlemma} with
the measure $\nu$ and the sequence of sets
$X_{n}=\mathcal{R}_n(\lambda_0)$.
Observe that the
assumptions
in Lemma \ref{limitlemma}
are satisfied by the asymptotics
of $N_d(n)$
and the equidistribution of periodic points
with respect to $\mu_{\lam_0}$
on $J_{\lam_0}\supseteq \Supp \nu$, see \cite{BD99, DS03, DS10}.
Set
$$\mathcal{M}_{n}
:=\sum\limits_{j=1}^{N_d(n)}a_j^n\delta_{\gamma_{j,n}}.$$
By definition, each
$\mathcal{M}_n$ is a discrete
probability measure supported on
$\mathcal{J}$.
In particular, $\cup_n \mathrm{Supp}\mathcal{M}_n$
is relatively compact. Hence,
by Banach-Alaoglu theorem, there exists a
converging subsequence $\mathcal{M}_{n_l}\to \widetilde{\mathcal{M}}$.
Observe that also
$\widetilde{\mathcal{M}}$ is a
measure on $\mathcal{J}$.

We claim that
 $\widetilde {\mathcal M}$ is postcritically aligned.
Indeed, by the assumption
on the motion of the repelling cycles
and Theorem
\ref{equivth},
there are no Misiurewicz parameters in $M$. So,
 $\mathcal{M}_{n_l}$ is postcritically aligned
for all $l$.
It then follows from Hurwitz theorem
that 
also 
$\widetilde{\mathcal{M}}$ is postcritically aligned.

By construction, we have 
$$(p_{\lambda_0})_*\widetilde{\mathcal{M}}
=\lim_{l\to\infty}(p_{\lambda_0})_*\mathcal{M}_{n_l}=\nu.$$
On the other hand, 
a priori, the measure $\widetilde {\mathcal M}$ may not be $\mathcal F$-invariant (hence, it may not be a web). We define $\mathcal M$ to be any limit of a subsequence of 
$n^{-1} \sum_{j=0}^{n-1}\mathcal{F}^j_*\widetilde{\mathcal{M}}$.
Then, we have $\mathcal F_*\mathcal M=\mathcal M$. 
 Since $\nu$ is $f_{\lam_0}$-invariant, for every $j\in \mathbb N$
 we have
 \[(p_{\lambda_0})_*\mathcal{F}^j_*\widetilde{\mathcal{M}}
 =
 (f^j_{\lam_0})_* \big( (p_{\lam_0})_* \widetilde{\mathcal M}
\big) =
(f^j_{\lam_0})_*\nu
=\nu.\] 
In particular, for every $n\in \mathbb N$ we have
  $(p_{\lambda_0})_*\left(n^{-1}\sum_{j=0}^{n-1}\mathcal{F}^j_*\widetilde{\mathcal{M}}\right)=\nu.$ It follows that $\mathcal{M}$ is a $\nu$-web. 
  As above, since ${\mathcal M}$ is a limit of  postcritically aligned 
  measures, it is 
   postcritically aligned, too. 
  Up to replacing ${\mathcal M}$ with one 
  of its ergodic components (by means of Choquet theorem), we can also assume that ${\mathcal M}$ is a 
   postcritically aligned ergodic web.
   The assertion follows.
\end{proof}

\begin{remark}
One can also construct webs as in Proposition \ref{p:nuweb} by using the
equidistribution of preimages
of generic points instead of repelling points, see 
\cite{DS03}.
Indeed, as mentioned above, by \cite{BBD18,B16}
the stability of a family implies
the existence of 
an element $\sigma \in \mathcal J\setminus \mathcal J_s$, i.e.,
a holomorphic map $\sigma\colon M\to V$
whose graph does not intersect the postcritical set.
Hence, one
can consider the measures
$$ \mathcal{M}_{\gamma, n}
:=d^{-n}_t
\sum_{\sigma\in \mathcal{F}^{-n}(\gamma)}a^n_\sigma\delta_\sigma$$
where $\{a^n_\sigma\colon n\in \mathbb N, \sigma\in \mathcal{F}^{-n}(\gamma)\}$
is chosen so that $(p_{\lambda_0})_*\mathcal{M}_{\gamma,n} \to \nu$.
Observe that the support of $\mathcal M_n$ is discrete and disjoint from $\mathcal J_s$.
In particular, every $\mathcal M_{\gamma, n}$ is postcritically aligned. Hence,
as in the proof of Proposition \ref{p:nuweb}, we can use the sequence $(\mathcal M_{\gamma,n})_{n \in \mathbb N}$ to construct a web $ \mathcal{M}$ satisfying the properties in Proposition \ref{p:nuweb}
\end{remark}

The following is
an immediate consequence
of Proposition \ref{p:nuweb}
and Corollary \ref{corol-mass-0}. Observe that, also in this case, $\mathcal M$ as in the statement
is acritical by Lemma \ref{l:pc-acritical}.

\begin{corollary}
Fix $\lambda_0\in M$ and
let $\nu$ be an ergodic
$f_{\lambda_0}$-invariant probability
measure supported in $J_{\lambda_0}$ such
that $h_\nu(f_{\lam_0})>\log d^*_{k-1}(f_{\lambda_0})$.
There exists
a
 postcritically aligned
 ergodic
$\nu$-web $\mathcal{M}$.
\end{corollary}

\subsection{Positivity of Lyapunov exponents
}\label{sec:Lyapunov}

Once the existence of 
 a web as in Proposition \ref{p:nuweb} is established, in order to apply the ideas in \cite{BBD18} to
prove {\bf (S5')}
we need to check that
the function associating to every $\lam$ the smallest Lyapunov exponent
of $(p_\lam)_* \mathcal M$
is 
locally uniformly bounded from below
by some strictly positive constant 
in a neighbourhood of the starting parameter
$\lam_0$.
 In order to do this,
it is enough to show that this function is 
 strictly positive and
lower semicontinuous in a neighbourhood of $\lambda_0$ (possibly depending on $\nu_0$).

\medskip

Given a web $\mathcal M$,
consider the current on $\mathcal V$ given by
\begin{equation}\label{eq:WM}
W_{\mathcal{M}}:=\int[\Gamma_\gamma]d\mathcal{M}(\gamma),
\end{equation}
where $[\Gamma_\gamma]$ is the current of
integration on the graph of the map $\gamma$.
Observe that $ W_{\mathcal M}$  is positive and closed.

 \begin{definition}\label{d:lyapunov}
Let $\mathcal M$ be a web.
We define the function $L_{\mathcal M}\colon M\to \mathbb R$
as
 \begin{equation*}
 L_{\mathcal M} 
 = \pi_* (\log |\jac f_{\lambda}(z)| W_{\mathcal M}),
 \end{equation*}
where
$W_{\mathcal M}$ is as in \eqref{eq:WM}
and $\pi$
denotes as usual the projection on the parameter space $M$.
\end{definition}

Up to restricting to a small
neighbourhood of a given parameter $\lam_0\in M$,
 we can assume that 
 $W_{\mathcal M}$
 is horizontal. Hence, the function $L_{\mathcal M}$ is well-defined.
Moreover,
 since $\log |  \jac f_{\lambda}(z)|$
is a psh function on $\mathcal V$,
by \cite[Theorem 2.1]{DS06} and
\cite[Proposition A.1]{Pha05}, the
function $L_{\mathcal M}$
is psh on $M$, or identically equal to $-\infty$.
Observe that, when $(p_\lam)_{*}\mathcal M$ is ergodic for all $\lam \in M$, the function $L_{\mathcal M}(\lam)$
is equal to the sum of the Lyapunov exponents of $(p_\lam)_{*}\mathcal M$.

 \begin{lemma}\label{ddcl=0}
Take $\lam_0 \in M$ and assume
that $\nu$ is an ergodic $f_{\lam_0}$-invariant measure
supported in $J_{\lam_0}$ and such that
the function $\log |\jac f_{\lam_0}|$
is $\nu$-integrable.
If $\mathcal{M}$ is a
critically aligned
$\nu$-web,
then $dd^c L_{\mathcal{M}}(\lambda)=0$.
 \end{lemma}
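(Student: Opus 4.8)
The plan is to show that $L_{\mathcal M}$ is pluriharmonic by exploiting the acriticality of $\mathcal M$ together with the stability hypothesis, following the same scheme used in \cite{BBD18} for the equilibrium web. Recall from the discussion after Definition \ref{d:lyapunov} that $L_{\mathcal M}$ is psh on $M$ (it is not identically $-\infty$ because $\log|\jac f_{\lam_0}|$ is $\nu$-integrable, so $L_{\mathcal M}(\lam_0)>-\infty$), and that
\[
L_{\mathcal M} = \pi_* \big(\log |\jac f_\lambda (z)|\cdot W_{\mathcal M}\big)
= \pi_* \big( [C_f]\wedge W_{\mathcal M}\big)^{\mathrm{pot}},
\]
in the sense that $dd^c L_{\mathcal M} = \pi_* \big([C_f]\wedge W_{\mathcal M}\big)$, using $[C_f]=dd^c \log|\jac f|$ and the fact that $\pi$ commutes with $dd^c$ (slicing theory for horizontal currents, \cite[Theorem 2.1]{DS06}, \cite[Proposition A.1]{Pha05}, after shrinking $M$ around $\lam_0$ so that $W_{\mathcal M}$ is horizontal). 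So it suffices to prove that the positive measure $\pi_*\big([C_f]\wedge W_{\mathcal M}\big)$ on $M$ vanishes, i.e. that the wedge product $[C_f]\wedge W_{\mathcal M}$ puts no mass where it could contribute to $dd^c L_{\mathcal M}$.

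The key point is that $\mathcal M$ is acritical and satisfies \eqref{criticalinside}: for $\mathcal M$-a.e.\ $\gamma$, if $\Gamma_\gamma$ meets $C_f^+=\cup_{m\ge 0}f^m(C_f)$ then $\Gamma_\gamma\subset C_f^+$, and $\mathcal M(\mathcal J_s)=0$ where $\mathcal J_s$ collects the $\gamma$ whose graph meets the grand orbit $GO(C_f)$. First I would use this to argue that, for $\mathcal M$-a.e.\ $\gamma$, the graph $\Gamma_\gamma$ is disjoint from $C_f$ itself (if $\Gamma_\gamma$ met $C_f\subset GO(C_f)$, then $\gamma\in\mathcal J_s$, which has measure zero). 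Consequently the restriction of $\log|\jac f|$ to each such $\Gamma_\gamma$ is pluriharmonic: indeed $\gamma\mapsto \log|\jac f_\lambda(\gamma(\lambda))|$ is a pluriharmonic function of $\lambda$ on all of $M$, because $\log|\jac f_\lambda(\gamma(\lambda))|$ is psh in $\lambda$ (composition of the psh function $\log|\jac f|$ on $M\times V$ with the holomorphic graph map) and its $-\infty$ locus, being where $\Gamma_\gamma$ hits $C_f$, is empty — a psh function that is locally bounded below and nowhere $-\infty$ need not be harmonic, so this needs the sharper input that $[C_f]\wedge[\Gamma_\gamma]=0$ for these $\gamma$. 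That sharper input is exactly the statement that $\Gamma_\gamma\cap C_f=\emptyset$ together with the fact that $[\Gamma_\gamma]$ is a smooth (integration) current on its support and $[C_f]$ is supported on $C_f$, so the intersection current $[C_f]\wedge[\Gamma_\gamma]$ is the zero current. I would then integrate over $\gamma$: by Fubini for currents (legitimate since $W_{\mathcal M}=\int[\Gamma_\gamma]\,d\mathcal M(\gamma)$ and the slicing is continuous for horizontal currents),
\[
dd^c L_{\mathcal M} = \pi_*\big([C_f]\wedge W_{\mathcal M}\big)
= \pi_*\!\int [C_f]\wedge[\Gamma_\gamma]\,d\mathcal M(\gamma)
= \int \pi_*\big([C_f]\wedge[\Gamma_\gamma]\big)\,d\mathcal M(\gamma)=0,
\]
since each integrand vanishes for $\mathcal M$-a.e.\ $\gamma$.

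The main obstacle I expect is making the wedge product $[C_f]\wedge W_{\mathcal M}$ and the exchange of $dd^c$, $\pi_*$, and $\int d\mathcal M$ rigorous: $W_{\mathcal M}$ is merely a positive closed horizontal current of bidegree $(k,k)$, not smooth, so a priori $[C_f]\wedge W_{\mathcal M}$ need not be well defined, and one must argue that the potential $\log|\jac f|$ of $[C_f]$ is integrable against $W_{\mathcal M}$ (which is the $\nu$-integrability hypothesis, since $\pi_*(\log|\jac f|\,W_{\mathcal M})$ slices to $\langle\nu_\lambda,\log|\jac f_\lambda|\rangle$ and at $\lam_0$ this is finite). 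The clean way around this, which I would adopt, is to avoid forming the wedge product at the level of $W_{\mathcal M}$ and instead work fiberwise: each $[\Gamma_\gamma]$ is smooth, so $[C_f]\wedge[\Gamma_\gamma]$ is unambiguously defined and pushes forward to $dd^c(\lambda\mapsto\log|\jac f_\lambda(\gamma(\lambda))|)$ on $M$, which vanishes for a.e.\ $\gamma$ by the acriticality argument above; then one integrates the resulting (signed, but here zero) measures against $\mathcal M$, using that $\lambda\mapsto \int \log|\jac f_\lambda(\gamma(\lambda))|\,d\mathcal M(\gamma)=L_{\mathcal M}(\lambda)$ and dominated convergence / Fubini on the level of functions rather than currents to transfer the pluriharmonicity of each fiber to $L_{\mathcal M}$. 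The remaining care is the passage from "$\Gamma_\gamma$ disjoint from $C_f$" to "the psh function $\lambda\mapsto\log|\jac f_\lambda(\gamma(\lambda))|$ is pluriharmonic", which I would handle by noting that this function equals $\log$ of the modulus of a holomorphic function (a local branch of $\jac f_\lambda(\gamma(\lambda))$, which is nonvanishing precisely because $\Gamma_\gamma\cap C_f=\emptyset$), hence is locally the real part of a holomorphic function, hence pluriharmonic.
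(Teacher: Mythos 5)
Your proposal takes a genuinely different route from the paper's. The paper's proof decomposes the web as $W_{\mathcal M}=W_{\mathcal M}^\varepsilon+\tilde W_{\mathcal M}$ according to whether a graph $\Gamma_\gamma$ enters the sublevel set $\{|J|<\varepsilon\}$, observes that the $\tilde W_{\mathcal M}$ part contributes zero (each fibre being harmonic and uniformly bounded on its graph), and then controls the remaining term $I_1(\varepsilon)$ by a quantitative estimate: Claim~1 derives a modulus of continuity $\nu\bigl((C_{f_{\lam_0}})_\varepsilon\bigr)\le c(\varepsilon)/|\log\varepsilon|$ from the $\nu$-integrability of $\log\operatorname{dist}(\cdot,C_{f_{\lam_0}})$, and Claim~2 (a Harnack-type inequality) propagates this estimate in the parameter so that $I_1(\varepsilon)\to 0$. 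You instead propose to bypass the $\varepsilon$-truncation entirely and argue at the level of functions: $\mathcal M(\mathcal J_s)=0$ gives that $g_\gamma(\lambda):=\log|\jac f_\lambda(\gamma(\lambda))|$ is pluriharmonic on all of $M$ for $\mathcal M$-a.e.\ $\gamma$ (as the log-modulus of a nowhere-vanishing holomorphic function of $\lambda$), and then to swap $dd^c$ with $\int d\mathcal M(\gamma)$ in $L_{\mathcal M}(\lambda)=\int g_\gamma(\lambda)\,d\mathcal M(\gamma)$ by Fubini. This is a cleaner scheme in principle, but the Fubini exchange — the entire crux of the lemma — is asserted rather than established. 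To justify it one must check absolute integrability of $(\lambda,\gamma)\mapsto g_\gamma(\lambda)$ against $dV(\lambda)\otimes d\mathcal M(\gamma)$ locally in $\lambda$; since $g_\gamma^+$ is uniformly bounded on compacts, this amounts to knowing that $L_{\mathcal M}\in L^1_{\mathrm{loc}}$, i.e.\ invoking the fact (from Proposition~\ref{p:pham} and Pham's work) that $L_{\mathcal M}$ is plurisubharmonic and not identically $-\infty$, which the hypothesis $\log|\jac f_{\lam_0}|\in L^1(\nu)$ guarantees. You never pin this down, and "dominated convergence / Fubini on the level of functions" is exactly the delicate step that can fail for integrals of harmonic functions whose negative parts are uncontrolled; the paper's $\varepsilon$-splitting and Claim~1 exist precisely to provide such control without appealing to a priori $L^1_{\mathrm{loc}}$-ness. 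Also, the intermediate paragraph about needing the "sharper input $[C_f]\wedge[\Gamma_\gamma]=0$" is a detour: you already have the right argument in the final sentence, namely that $\lambda\mapsto\jac f_\lambda(\gamma(\lambda))$ is a globally defined, nowhere-vanishing holomorphic function, so its log-modulus is pluriharmonic outright; the intersection-current phrasing adds nothing and risks confusion about whether $[C_f]\wedge[\Gamma_\gamma]$ is well defined.
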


 \begin{proof}
Since the problem is local, 
we can fix $\lam_0 \in M$ and work on
  a small ball $B$ around $\lambda_0\in M$
  in the parameter space.
  The current as in \eqref{eq:WM} is then horizontal.
  We can also assume
  that $B$ has dimension 1.
 Define $J(\lambda,z):=\jac f_{\lambda}(z)$. Since the function 
 $\log |J(\lambda_0,\cdot)|$
 is $\nu$-integrable,
  by
\cite[Theorem A.2]{Pha05},
 the currents
 $\log |J(\lambda,z)| W_{\mathcal{M}}$
 and
  $dd^c (\log |J(\lambda,z)| W_{\mathcal{M}})$
 are well-defined.
 Since
 $L_{\mathcal M} = \pi_*(\log |J(\lambda,z)| W_{\mathcal{M}})$,
  to get the assertion
  it is
  enough to show that
  $dd^c(\log |J(\lambda,z)| W_{\mathcal{M}})=0$ on $B\times V$. 
 Observe that 
 $dd^c(\log |J(\lambda,z)| W_{\mathcal{M}})$ is a positive measure on $B\times V$.
  
 For this, we can follow the strategy in
\cite[Proposition 3.5]{BBD18}. For  any
small
$\varepsilon>0$,
set
$$S_\varepsilon
:=\{\gamma\in \mathrm{Supp} \mathcal{M}
:\Gamma_\gamma\cap\{|J(\lambda,z)|
<\varepsilon\}\ne\emptyset\}$$ and
define
\[W_{\mathcal{M}}^\varepsilon
:=\int_{S_\varepsilon}[\Gamma_\gamma]
d\mathcal{M}(\gamma)
\quad
\mbox {and } \quad \widetilde{W}_{\mathcal{M}}
:= W_{\mathcal{M}}-W_{\mathcal{M}}^\varepsilon.\]
For any smooth
  test function $\phi$ compactly supported in $B\times V$,
  we have
\begin{equation}\label{eq:decJW}
\begin{aligned}
 \langle dd^c (\log |J(\lambda,z)| W_{\mathcal{M}}),
 \phi \rangle
 &=
   \langle\log |J(\lambda,z)| W_{\mathcal{M}},
   dd^c \phi \rangle \\
   &=   \langle\log |J(\lambda,z)|W_{\mathcal{M}}^\varepsilon,
   dd^c \phi \rangle
   + \langle\log |J(\lambda,z)| \widetilde{W}_{\mathcal{M}},
   dd^c \phi \rangle \\
   &=:  I_1(\varepsilon)+I_2(\varepsilon).
   \end{aligned}
\end{equation}

  Since the function $\lam \mapsto \log |J(\lambda,\gamma(\lambda))|$ is 
  harmonic
  for any
  $\gamma\in{\mathrm{Supp} \mathcal{M}\setminus S_\varepsilon}$,
  we have
  $\int_B\log |J(\lambda,\gamma(\lambda))|dd^c(\phi\circ \gamma)=0$ for every such $\gamma$.
  This gives
\[
  I_2(\varepsilon)
  = \langle\log |J(\lambda,z)|\widetilde{W}_{\mathcal{M}}, 
  dd^c \phi \rangle
  =  \int_{\mathrm{Supp} \mathcal{M}\setminus S_\varepsilon}
  d\mathcal{M}(\gamma)
  \left(\int_B\log |J(\lambda,\gamma(\lambda))|
  dd^c(\phi\circ \gamma) \right)
  = 0,
\]
i.e., the second term in the right
hand side of \eqref{eq:decJW} vanishes for all $\varepsilon>0$.

In order to conclude, we need to prove that the first term
$I_1(\varepsilon)$
tends to $0$
as $\varepsilon\to 0$.
The argument in \cite{BBD18} uses the fact that
the measure of maximal entropy $\mu_{\lam_0}$ is
the Monge-Ampère of a
$(1,1)$-current with H\"older-continuous local potentials,
and in particular
 that
it gives mass
$\lesssim \varepsilon^a$ (for some positive $a$)
to balls of radius $\varepsilon$,
but this is actually
not necessary. The following claim 
will
be enough
to complete the proof.

\medskip

\textbf{Claim 1.} For any $\varepsilon\ll 1$
there exists  $c(\varepsilon)>0$
with $c(\varepsilon)\to0$
as $\varepsilon\to 0$ such that 
\begin{equation}\label{eq:moderete}
    \nu\left(\left(C_{f_{\lam_0}}\right)_\varepsilon\right)
    \le  \frac{c(\varepsilon)}{|\log|\varepsilon||}
\end{equation}
where $\left(C_{f_{\lam_0}}\right)_\varepsilon$
is the $\varepsilon$-neighbourhood of $C_{f_{\lam_0}}$.

\begin{proof}
Since $J(\lam_0,z)$ is holomorphic there
exists $A_1>0$ and $\varepsilon$ small enough
such that for every $z$ such that $\mathrm{dist}(z,C_{f_{\lam_0}})<\varepsilon$
we have
\begin{equation}\label{eq:Jlipshits}
    |J(\lam_0,z)|\le A_1 \mathrm{dist}(z,C_{f_{\lam_0}}).
\end{equation}
Since by assumption 
 the function 
 $ z\mapsto\log |J(\lambda_0,z)|$ is $\nu$-integrable,
\eqref{eq:Jlipshits}
 implies
that the function $z\mapsto \log \mathrm{dist}(z,C_{f_{\lam_0}})$
is also $\nu$-integrable.
Hence, the measure $\widetilde{\nu}
:=|\log \mathrm{dist}(z,C_{f_{\lam_0}})|\nu$
is finite and
satisfies
$\widetilde{\nu}\ll\nu$, i.e.,  we have $\widetilde{\nu}(E)=0$
if $\nu(E)=0$. In particular,
we have $\widetilde{\nu}(C_{f_{\lam_0}})=0$.
Thus, there exists  $c(\varepsilon)>0$ with
$c(\varepsilon)\to0$ as $\varepsilon\to 0$
such that
$\widetilde{\nu}
\left(\left(C_{f_{\lam_0}}\right)_\varepsilon\right)
\le c(\varepsilon)$. 
Hence, for all $\varepsilon$
sufficiently small, we have
$$|\log|\varepsilon||{\nu}
\left(\left(C_{f_{\lam_0}}\right)_\varepsilon\right)
\lesssim 
\widetilde{\nu}
\left(\left(C_{f_{\lam_0}}\right)_\varepsilon\right)
\le c(\varepsilon).$$
This gives \eqref{eq:moderete} and proves the claim.
\end{proof}

Let now $\left(C_{f}\right)_\varepsilon$
be the $\varepsilon$-neighbourhood of
$C_{f}$ in $B\times V$.
Since $J(\lambda,z)$ is holomorphic we can
find a constant $c_1$ such that
$\left(C_{f}\right)_\varepsilon
\subset \{|J(\lambda,z)|<c_1\varepsilon\}$.
By using {\L}ojasiewicz inequality we see that
there are constants $c_2,\beta_1>0$ such that 
$\{|J(\lambda,z)|<c_1\varepsilon\}\subset
\left(C_{f}\right)_{c_2\varepsilon^{\beta_1}}$.
Again by using {\L}ojasiewicz inequality we have 
 \[
\{
z \colon (\lam_0,z) \in
\left(C_{f}\right)_\varepsilon
\}
\subset 
\left(C_{f_{\lam_0}}\right)_{c_3\varepsilon^{\beta_2}}
\mbox {for some constants }  c_3,\beta_2>0.
\]

Fix now a ball $B'\Subset B$ centered at $\lambda_0$. 

\medskip

\textbf{Claim 2.} (see the Claim in \cite[Lemma 3.6]{BBD18})
There exists $0<\alpha\le1$ such that
$\mathrm{sup}_{B'}|\psi|\le|\psi(\lam_0)|^\alpha$
for every  holomorphic function $\psi:B\to\mathbb{D}^*$.

\medskip

Take $\gamma\in\Supp \mathcal{M}$ such that
$\Gamma_\gamma\cap C_f=\emptyset$
but $\Gamma_\gamma\cap (C_f)_\varepsilon\ne\emptyset$.
Then by using Claim 2 
for the holomorphic function $J(\lambda,\gamma(\lambda))$
we have
$\Gamma_{\gamma|_{B'}}
\subset \left(C_{f}\right)_{c_4\varepsilon^{\alpha\beta_3}}$
for some constants  $c_4,\beta_4>0$.
Since
$\mathcal M$ is critically aligned, we have
\begin{align*}
    \mathcal{M}
    \{\gamma\in\mathcal{J}
    :
    \Gamma_{\gamma|_{B'}}\cap
    \left(C_{f}\right)_\varepsilon
    \ne \emptyset\}
    &\le 
    \mathcal{M}
    \{\gamma\in\mathcal{J}:\Gamma_{\gamma|_{B'}}
    \subseteq
\left(C_{f}\right)_{c_4\varepsilon^{\alpha\beta_3}}\}\\
    & \le \mathcal{M}
    \{\gamma\in\mathcal{J}:(\lambda_0,\gamma(\lambda_0))
    \in \left(C_{f}\right)_{c_4\varepsilon^{\alpha\beta_3}}\}\\
    &\le  
    \nu
    \left\{\left(C_{f_{\lam_0}}\right)_{c_3(c_4\varepsilon^{\alpha\beta_3})^{\beta_2}}\right\}.
\end{align*}
Setting $\varepsilon' :=c_3(c_4\varepsilon^{\alpha\beta_3})^{\beta_2}$
for simplicity, we can now apply Claim 1 to get
\[
\mathcal{M}
\{\gamma\in\mathcal{J}:\Gamma_{\gamma|_{B'}}
\cap \left(C_{f}\right)_\varepsilon\ne \emptyset\}
\le
\nu
\left\{\left(C_{f_{\lam_0}}\right)_{c_3(c_4\varepsilon^{\alpha\beta_3})^{\beta_2}}\right\}
\le \frac{c(\varepsilon')}{|\log \varepsilon'|}
\lesssim \frac{c(\varepsilon')}{\log |\varepsilon|}
=\frac{c'(\varepsilon)}{\log |\varepsilon|}
\]
for some positive function $c'(\varepsilon)$
with $c'(\varepsilon)\to 0$ as $\varepsilon \to 0$.

Finally, by using the last inequality and 
the fact that
\[S_\varepsilon\subset S_{c_5\varepsilon^{\beta_4}}':=\{\gamma\in\mathcal{J}:
\Gamma_{\gamma|_{B'}}\cap \left(C_{f}\right)_{c_5\varepsilon^{\beta_4}}\ne \emptyset\} \mbox{ for some positive constants } c_5,\beta_4\]
(which again follows from 
{\L}ojasiewicz inequality),
setting 
$\varepsilon'' = c_5 \varepsilon^{\beta_4}$ for simplicity
we deduce that
\begin{align*}
    I_1(\varepsilon) = \langle\log |J(\lambda,z)|W_{\mathcal{M}}^\varepsilon, dd^c \phi \rangle & \lesssim |\log|\varepsilon|| \int_{ S_\varepsilon} d\mathcal{M}(\gamma)=|\log|\varepsilon|| \cdot W_{\mathcal{M}}^\varepsilon(S_\varepsilon)\\
    &\lesssim |\log|\varepsilon|| \cdot W_{\mathcal{M}}^\varepsilon(S'_{\varepsilon''})
    \le |\log|\varepsilon||  \cdot
    \frac{ c'(\varepsilon'')}{|\log|\varepsilon''||}
    \lesssim c'(\varepsilon'').
\end{align*}
Since $c'(\varepsilon'')\to 0$ as $\varepsilon\to 0$,
the assertion follows.
\end{proof}

\begin{corollary}\label{corol-lowersc}
 Let 
 $\mathcal M$ be a critically aligned web. Assume that
 $(p_\lam)_* \mathcal M$ is ergodic for all $\lam \in M$
 and denote by 
 $L^1_{\mathcal{M}}(\lambda)\ge\ldots\ge L^k_{\mathcal{M}}(\lambda)$
the
$k$
Lyapunov exponents
of
$(p_\lambda)_*\mathcal{M}$, counting multiplicities.
Assume also that there exists $\lam_0\in M$
such that
$L_{\mathcal M}^k(\lam_0)>-\infty$.
 Then the
 function $\lambda \mapsto -L^k_{\mathcal{M}}(\lambda)$
 of $\mathcal{M}$ is  plurisubhamonic. In particular, it is upper semicontinuous.
\end{corollary}

\begin{proof}
By Proposition \ref{p:pham} the upper 
partial sums of Lyapunov exponents are
plurisubharmonic
In particular,
the function $\lam \mapsto \sum_{j=1}^{k-1}L^j_{\mathcal{M}}(\lambda)$
is plurisubharmonic. 
 Set $\nu_0 := (p_{\lam_0})_* \mathcal M$.
Since, by assumption, the smallest Lyapunov exponent of $\nu_0$ is finite, the function $\log |\jac f_{\lam_0}|$
is $\nu_0$-integrable. Thanks to Lemma \ref{ddcl=0}
we have that $L_{\mathcal{M}}(\lambda)$
is pluriharmonic. Since $-L^k_{\mathcal{M}}(\lambda)
=\sum_{j=1}^{k-1}L^j_{\mathcal{M}}(\lambda)
-L_{\mathcal{M}}(\lambda)$, it follows that the function 
$-L^k_{\mathcal{M}}(\lambda)$ is plurisubharmonic.
\end{proof}

\subsection{Existence of the lamination and proofs of the main results}\label{s:new-lamination}
The following proposition
has the same proof as
\cite[Theorem 4.1]{BBD18}, see also
\cite[Theorem 3.4.1]{B16} and
\cite[Section 7]{BB18} for an overview of the arguments. 
We will give in the Appendix an intermediate statement, for later reference.

\begin{proposition}\label{t:lamination}
Fix $\lambda_0\in M$. Let $\nu_0$ be an ergodic
$f_{\lambda_0}$-invariant measure with $\mathrm{Supp} \nu_0\subseteq J_{\lam_0}$.
 Assume that there exists an acritical ergodic
 $\nu_0$-web $\mathcal M$
such that, for all $\lam\in M$, 
the Lyapunov exponents $L^j_{\mathcal M}(\lambda)$ of
the measures
$(p_\lam)_* \mathcal M$
 are uniformly strictly positive.
Then
there
exists a $\nu_0$-lamination on $M$.
\end{proposition}

\begin{proof}[Sketch of proof]
Thanks to the locally
uniform lower bound for the smallest Lyapunov exponents (and in particular to Proposition \ref{p:bbd-gen} below),
one can prove the following property,
see the Fact in \cite[Section 4.3]{BB18}:

\medskip

\textbf{Fact.} 
$\mathcal M
( \mathcal K_{\cap})=0$
 for every  compact subset
$\mathcal K \subseteq \mathcal J$,
where
\[\mathcal K_{\cap}
:=
\{
\gamma \in \mathcal J
\colon
\exists j \in \mathbb N, 
\exists \gamma' \in \mathcal K 
\mbox{ such that }
\Gamma_{\mathcal F^j (\gamma)}\cap \Gamma_{\gamma'}\neq \emptyset
\mbox{ and } 
\Gamma_{\mathcal F^j(\gamma)}
\neq
\Gamma_{\gamma'}
\}.\]

\medskip

Once this Fact is establish,
in order to construct 
 a lamination one can follow the arguments in \cite[Section 4.3]{BBD18} or \cite[Section 3.4.3]{B16} to
 show
 that the set
 $\mathcal L 
 := \mathcal J\setminus (\mathcal J_{\cap}\cup \mathcal J_s)$
 gives the desired lamination
 associated to $\nu$.
\end{proof}

 We can now complete the proofs of our main results.

\begin{proof}[Proof of Theorem \ref{t:mainequiv}]
 Proposition \ref{p:nuweb}
and Corollary \ref{corol-lowersc}
show that
(up to possibly restricting $M$ as in the statement of the theorem)
{\textbf {(S1)}} implies {\textbf {(S5')}}. We used here the fact that, by assumption, the Lyapunov exponents of $\nu_0$ are strictly positive, and that the function $\lambda\mapsto L^k_{\mathcal M} (\lam)$ is lower semicontinuous by Corollary \ref{corol-lowersc}.
Proposition \ref{t:lamination} shows that
{\textbf {(S5')}} implies {\textbf {(S4')}}.
As remarked after the statement
of the theorem, these conditions imply those
of Theorem \ref{equivth}. The proof is complete.
\end{proof}

 In order to prove Theorem \ref{t:mainlamin} (and Corollary \ref{c:pk}) we will need the following further lemma.

\begin{lemma}\label{l:entropy-lamination}
Take $\lam_0\in M$ and let $\nu$ be an $f_{\lambda_0}$-invariant measure. Let $\mathcal M$ be a $\nu$-web and $\mathcal L$ a $\nu$-lamination. Then,
for every $\lambda \in M$, the measure-theoretic entropy of $(p_\lam)_* \mathcal M$ is equal to $h_\nu (f_{\lam_0})$.
\end{lemma}

\begin{proof}
We denote $\nu_\lam := (p_\lam)_* \mathcal M$ for simplicity. In particular, we have $\nu=\nu_{\lam_0}$.
Let $A\subset J_{\lam_0}$ be a measurable set with $\nu_{\lam_0}(A)>0$. Set 
$$\mathcal{A}_A:=\{\gamma\in \mathcal{L}: \gamma(\lam_0)\in A\}.$$
For every $\lam \in M$,
set also $A_\lam:=p_\lam(\mathcal{A}_A)$.
Note that we have $\mathcal{M}(\mathcal{A}_A)
=\nu_{\lam_0}(A)
=\nu_\lam
(A_\lam)$ 
for all $\lam \in M$.

Fix $M\ni \lam_1\neq \lam_0$ and
let
$\xi:=\{A^i\}$ 
be a measurable partition for $\nu_{\lam_0}$. 
Define the measurable partition $(\xi)_{\lam_1}:= \{A^i_{\lam_1}\}$ for  $\nu_{\lam_1}$. 
By construction, the entropy of $\xi$ with respect 
 to
$\nu_{\lam_0}$ 
 \cite{CFS,KH97}
is equal  to the entropy of $(\xi)_{\lam_1}$ with respect
 to
$\nu_{\lam_1}$. By the definition of $\mathcal L$,
 for every $n\in \mathbb N$ we have
\[  \Big( \bigvee_{j=0}^n f_{\lam_0}^{-j}\xi\Big)_{\lam_1}=
\bigvee_{j=0}^{n}  f_{\lam_1}^{-j} (\xi)_{\lam_1},\]
where
 we recall that, for every $\lam\in M$ and a given partition $\eta$, the 
partition $\bigvee_{j=0}^n f_\lam^{-j}\eta$ is defined as
\[\bigvee_{j=0}^n f_\lam^{-j}\eta:=
\{f_\lam^{-n}(B^n)\cap \ldots \cap f_\lam^{-1}(B^1)\cap B^0\colon B^0, \ldots, B^n \in \eta\}.\]
By
the definition of measure-theoretic entropy
\cite{CFS,KH97}
we conclude that
$h_{\nu_{\lam_0}}(f_{\lambda_0})
\le h_{\nu_{\lambda_1}}(f_{\lambda_1})$.
By reversing the roles of $\lam_0$ and $\lam_1$,
we also see
that $h_{\nu_{\lambda_1}}(f_{\lambda_1})
\le h_{\nu_{\lam_0}}(f_{\lambda_0})$. So,
we have $h_{\nu_\lambda}(f_{\lambda})
= h_{\nu_{\lam_0}}(f_{\lambda_0})$
for all $\lam \in M$. The assertion follows.
\end{proof}

\begin{proof}[Proof of Theorem \ref{t:mainlamin}]
Theorems  \ref{t:mainequiv} and 
\ref{t:lyapunov-positive}
show that
{\textbf {(S1)}} implies {\textbf {(S5')}}
on a neighbourhood of $\lam_0$, a priori depending on $\nu_0$.
As in the proof of Theorem \ref{t:mainequiv}, 
Proposition \ref{t:lamination} shows that
\textbf {(S5')} implies \textbf {(S4')}, hence
\textbf {(S4')} holds on the same neighbourhood.

Since $d^{*}_{k-1} (f_\lam)$
depends upper semicontinuously on $\lambda$,
it follows that the set
\[M'_{\lam_0, h}:= \{ \lam \colon d^*_{k-1} (f_\lam)< h \}\]
is open (and non-empty by the assumption on $\lam_0$). Define $M^0_{\lam_0,h}$ to be the connected component of $M'_{\lam_0, h}$ containing $\lam_0$. It is enough to show that 
\textbf {(S4')} and 
\textbf {(S5')} hold on $M^0_{\lam_0, h}$.

Assume that this is not the case. Let $\Omega$ be the maximal open subset of $M^0_{\lam_0, h}$ where \textbf {(S4')} and 
\textbf {(S5')} hold, and let $\mathcal M$ be as in \textbf {(S5')}.
By Lemma \ref{l:entropy-lamination}, 
the measure-theoretic entropy of $(p_\lam)_* \mathcal M$
is constant on $\Omega$.
It follows
from Theorem \ref{t:lyapunov-positive}
that the 
function $u(\lam):= -L^k_{\mathcal M} (\lam)$
is uniformly bounded from above on $\Omega$ by a strictly negative constant. 
Recall that this function is plurisubharmonic on $M$ by Corollary \ref{corol-lowersc}.
We can assume that $u\geq 0$ on the boundary of $\Omega$, since otherwise 
(thanks to the upper semicontinuity
of this function), we would have $u<0$
in a neighbourhood $\Omega_0$ of a point of the boundary of $\Omega$, 
and
we could extend the lamination to an open set $\Omega'=\Omega \cup \Omega_0$ which is larger than $\Omega$.

The function $u$ is then bounded above by a strictly negative constant on $\Omega$, and satisfies $u\geq 0$ on its boundary. Up to adding a negative constant, we can assume that there exists $\lam_1$ in the boundary of $\Omega$ with $u(\lam_1)=0$.
By upper semicontinuity,
for every $\epsilon>0$, there exists a neighbourhood of $\lam_1$ 
where $u\leq \epsilon$. 
Since $\lam_1$ is a point of strictly positive density for $\Omega$, this gives a contradiction with the mean inequality for $u$ at $\lam_1$. 
Hence, the only possibility is that $\Omega$ is equal to $M^0_{\lam_0, h}$. This proves the claim.

To conclude,
we just need to show that 
{\textbf {(Sh)}} holds.
In order to do this, 
we apply the 
Fact in the proof of Proposition
\ref{t:lamination}
with $\mathcal K = \mathcal J$.
For all $\lam \in M^0_{\lambda_0, h}$, $\nu$
as in the statement, and $\mathcal M$
an acritical
$\nu$-web,
we have $\mathcal M (\mathcal J_{\cap})=0$.
Since we also have $\mathcal M (\mathcal J_s)=0$
for all  such $\mathcal M$,
we have $\mathcal M (\mathcal J_\cap \cup \mathcal J_s)=0$
for all $\lam,\nu, \mathcal M$ as above.
It follows that 
$\mathcal L 
:= \mathcal J \setminus (\mathcal J_{\cap} \cup \mathcal J_s)$
gives the desired lamination,
see 
also the end of \cite[Section 4]{BBD18} for details.
The proof is complete.
\end{proof}

Recall that 
Corollary
\ref{c:pk}
in particular implies 
Theorem \ref{t:main-intro}.

\begin{proof}[Proof of Corollary \ref{c:pk}]
By Theorem \ref{t:mainlamin}, we only need to show that we can take $M_{\lam_0, h}=M$ for all $\lam_0\in M$ and $h> \log d^{k-1}$. This is clear since 
for every endomorphism of $\mathbb P^k$ we have $d^*_{k-1} = d^{k-1}$. The proof is complete.
\end{proof}

\appendix

\section{Exponential backward contraction along graphs}
\label{app:contraction}

Recall that $\mathcal J$ 
is as in \eqref{e:J} and
is a compact metric space, and that $\mathcal F\colon \mathcal J\to \mathcal J$
is defined as 
$(\mathcal F \gamma) (\lam) = f_\lambda (\gamma(\lambda))$.
A web $\mathcal M$
(associated to any $f_\lambda$-invariant measure $\nu$ at any parameter $\lam$)
is an $\mathcal F$-invariant probability measure on $\mathcal J$, and 
$\mathcal M$ is acritical if $\mathcal M (\mathcal J_s)=0$, see Definition \ref{d:acritical}.
In particular,  $\mathcal F$ is surjective on $\mathcal X := \mathcal J \setminus \mathcal J_s$. The \emph{natural extension}
$(\hat {\mathcal X}, \hat {\mathcal F}, \hat {\mathcal M})$
of the system $(\mathcal J, \mathcal F, \mathcal M)$ can be 
defined as follows (see for instance \cite[Section 10.4]{CFS}). 
An element $\hat \gamma \in \hat {\mathcal X}$ is a bi-infinite sequence $\hat \gamma := ( \ldots, \gamma_{-1}, \gamma_0, \gamma_1, \ldots)$
of elements of $\mathcal X$ with the property that $\mathcal F (\gamma_j) = \gamma_{j+1}$. 
For $j \in \mathbb Z$,
we denote by $\pi_j \colon \hat {\mathcal X}\to \mathcal X$ the projection
$\hat \gamma \mapsto \gamma_j$.
We also 
denote by $\hat {\mathcal F}$ the shift map on $\hat {\mathcal X}$, i.e.,
for a $\hat \gamma$ as above we set
\[\hat{\mathcal F} (\hat \gamma) :=
(\ldots, \mathcal F (\gamma_{-1}), \mathcal F (\gamma), \mathcal F (\gamma_1), \ldots)=
(\ldots, \gamma_0, \gamma_1, \gamma_2, \ldots).
\]
The map $\hat {\mathcal F}$ is invertible and satisfies
$\pi_j \circ \hat {\mathcal  F} = \mathcal F \circ \pi_j$ for all $j \in \mathbb Z$. 
There exists a probability measure $\hat {\mathcal M}$ on $\hat {\mathcal X}$ such that $(\pi_j)_* \hat {\mathcal M}= \mathcal M$ for all $j\in \mathbb Z$. This measure is ergodic if $\mathcal M$ is ergodic. 

\medskip

The graph of any element $\gamma \in \mathcal X$ does not intersect the (graph of the) critical orbit of the family $(f_\lambda)_{\lambda\in M}$. 
It follows that, for every $\gamma \in \mathcal X$,
the inverse branches of the holomorphic map
$(\lambda, f_{\lambda})$ are well-defined in a open neighbourhood of the graph of $\gamma$. Given $\hat \gamma \in \hat {\mathcal X}$, we will denote by
 $f_{\hat \gamma}^{-n}$ the inverse branch of order $n$, defined on an open set as above, sending the graph of $\gamma$ to the graph of $\gamma_{-n}$.
The following proposition,
proved in \cite[Propositions 4.2 and 4.3]{BBD18} in the case of the measure of maximal entropy, 
gives a uniform control on the size of the neighbourhoods where the inverse branches as above are defined, and an explicit control on their contraction.
Given $\gamma\in \mathcal X$, $\eta>0$, and a subset $\Omega\subset M$,
we denote by
$T_{\Omega} (\gamma, \eta)$ the $\eta$-neighbourhood of the graph of $\gamma$ over $\Omega$, i.e., we set
\[
T_{\Omega} (\gamma, \eta) := \{
(\lambda, z) \in \Omega \times \C^k\colon |z-\gamma(\lambda)|< \eta
\}.
\]

\begin{proposition}\label{p:bbd-gen}
  Let
  $M$ be a connected and simply connected complex manifold and
$(f_\lambda)_{\lambda \in M}$
 a holomorphic family of polynomial-like
maps of large topological degree.
 Assume that there exists 
  a constant 
  $A_1>0$ and an
   ergodic 
  acritical 
  web
 $\mathcal M$ 
 with the property that
  the Lyapunov exponents of
 $(p_\lam)_* \mathcal M$ 
 are strictly larger than $A_1$ for all $\lam \in M$.
  Then, for every
 open subset $\Omega \Subset M$
 and constant $0<A<A_1$,
 there exists $p\geq 1$, a Borel subset $\hat {\mathcal Y}\subseteq \hat {\mathcal X}$ with
 $\hat {\mathcal M} (\hat { \mathcal Y})=1$,
 and two measurable functions
 $\hat \eta_{p,A} \colon \hat {\mathcal Y} \to ]0,1]$
 and 
 $\hat l_{p,A} \colon \hat {\mathcal Y} \to [1,+\infty[$ which satisfy the following properties.

 For every $\hat \gamma \in \mathcal Y$ and every $n\in p \mathbb N^*$ the iterated inverse branch $f_{\hat \gamma}^{-n}$ is defined on the tubular neighbourhood 
 $T_{\Omega} (\gamma_0, \hat \eta_{p,A} (\hat \gamma))$
 of the graph $\Gamma_{\gamma_0}\cap (\Omega \times \C^k)$ of $\gamma_0$,
 and we have
 \[
f_{\hat \gamma}^{-n} (T_{\Omega} (\gamma_0, \hat \eta_{p,A} (\hat \gamma) ))\subset T_{\Omega} (\gamma_{-n}, e^{-nA})
\quad \mbox{ and }
\quad
\widetilde \Lip ( f^{-n}_{\hat \gamma} )\leq \hat l_{p,A}  (\hat \gamma) e^{-nA},
\]
where $\widetilde \Lip (f_{\hat \gamma}^{-n}):=\sup_{\lam \in \Omega}\Lip  (  (f^{-n}_{\hat \gamma})_{|B(\gamma_0(\lam), \hat \eta_{p,A}) })$.
\end{proposition}

 Observe that, without loss of generality, one can actually assume that $p=1$ in the above statement.

\bibliographystyle{alpha}

\end{document}